\newtheorem{thm}{Theorem}
\newtheorem{cor}[thm]{Corollary}
\newtheorem{lem}[thm]{Lemma}
\newtheorem{defn}[thm]{Definition}
\theoremstyle{definition}
\newtheorem{rem}{Remark}
\newtheorem{fact}[thm]{Fact}
\newcommand{\nn}{\mathbb{N}}
\newcommand{\vs}{\vec{s}}
\newcommand{\vt}{\vec{t}}
\newcommand{\vw}{\vec{w}}
\newcommand{\V}{V^\infty (A)}
\newtheorem*{thm*}{Theorem}
\begin{document}
\title[A Comb. proof of an Infinite version of the  Hales--Jewett theorem]
{A Combinatorial proof of an Infinite version of the  Hales--Jewett
theorem}
\author{Nikolaos Karagiannis}
\address{National Technical University of Athens, Faculty of Applied Sciences,
Department of Mathematics, Zografou Campus, 157 80, Athens, Greece}
\email{nkaragiannis@math.ntua.gr}
\thanks{2010 \textit{Mathematics Subject Classification}: 05D10.}
\thanks{\textit{Key words}: alphabets, words, variable words.}


\begin{abstract} We  provide a combinatorial proof of an
infinite  extension of the
Hales--Jewett theorem due to T. Carlson and
independently due to H. Furstenberg and Y. Katznelson.
\end{abstract}

\maketitle


\section{Introduction}\label{intro}

The aim of this note is to provide a new combinatorial  proof
of a well-known infinitary
extension of the Hales--Jewett  theorem. To state it, we need first
to recall the relevant terminology.  Let $A$ be an
\textit{alphabet} (i.e. any non-empty  set). The elements of $A$
will be called \textit{letters}. By $W(A)$ we denote the set of
\textit{constant words} \textit{over} $A$, that is the set of all
finite sequences with elements in $A$ including the empty
sequence. For $N \in \nn$, by $A^N$, we denote all finite
sequences from $A$, consisting of $N$ letters. We also fix an
element $x \notin A$ which will be regarded  as a
\textit{variable}. A \textit{variable word} \textit{over} $A$ is
an element in $W(A \cup \{x\}) \setminus W(A)$. The variable words
will be denoted by $s(x), t(x), w(x)$, etc. Given a variable word
$s(x)$ and $a\in A$, by $s(a)$ we denote the constant word in
$W(A)$ resulting from the substitution of  the variable $x$ with
the letter $a$. Let $q\in\nn$ with $q\geq 1$, then a
$q$-\textit{coloring} of a set $X$ is any map $c:X \rightarrow
\{1,...,q\}$. A subset $Y$ of $X$ will be called
\textit{monochromatic}, if there exists $1\leq i \leq q $ such
that $c(y)=i$, for all $y \in Y$. Finally, for every finite set
$X$, by $|X|$ we denote its cardinality.

We recall the following fundamental result in Ramsey Theory, due
to A. Hales and R. Jewett \cite{H-J}.

\begin{thm}\label{thm1} For every positive integers $p,q$
there exists a positive integer
  $HJ(p,q)$ with the following property.
For every finite alphabet $A$
  with $|A|=p$,
every $N \geq HJ(p,q)$ and every $q$-coloring of
  $A ^N$ there exists a variable word $w(x)$ of length $N$ such that the set
  $\{w(a):a \in A\}$ is monochromatic.
\end{thm}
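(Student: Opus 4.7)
The plan is to prove Theorem \ref{thm1} by induction on $p = |A|$, following the classical Hales--Jewett block argument. The base case $p=1$ is trivial since $|A^N|=1$ for every $N$. For the inductive step, I would assume $HJ(p,Q)$ exists for every $Q$ and prove $HJ(p+1,q)$ exists. Fix an alphabet $B$ of size $p+1$, single out $b^\star \in B$, and set $A = B \setminus \{b^\star\}$ together with an arbitrary reference letter $a^\star \in A$. The strategy is to divide the $N$ coordinates into $q$ consecutive blocks of lengths $N_1,\ldots,N_q$, chosen in reverse order so that
\[
N_k \;=\; HJ\!\left(p,\; q^{\,p^{k-1}(p+1)^{N_{k+1}+\cdots+N_q}}\right),
\]
and take $N = N_1+\cdots+N_q$.

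Given a $q$-coloring $c$ of $B^N$, the core of the proof is to construct, by a secondary induction on $k = 1,\ldots,q$, variable words $w_k(x)$ over $A$ of length $N_k$ satisfying the stabilization property
\[
(\ast_k) \qquad c\bigl(w_1(a_1)\cdots w_{k-1}(a_{k-1})\,w_k(a)\,v_{k+1}\cdots v_q\bigr) \text{ is independent of } a \in A,
\]
for all $a_1,\ldots,a_{k-1}\in A$ and all $v_{k+1}\in B^{N_{k+1}},\ldots,v_q \in B^{N_q}$. At stage $k$, with $w_1,\ldots,w_{k-1}$ already in hand, I would encode the entire ``behavior'' of each candidate word $u \in A^{N_k}$---namely the function $(a_1,\ldots,a_{k-1},v_{k+1},\ldots,v_q)\mapsto c(w_1(a_1)\cdots w_{k-1}(a_{k-1})\, u\, v_{k+1}\cdots v_q)$---into a single color drawn from a palette of size $q^{p^{k-1}(p+1)^{N_{k+1}+\cdots+N_q}}$, and then apply the inductive hypothesis $HJ(p,\cdot)$ to produce $w_k$.

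To conclude, consider the $q+1$ ``corner'' words
\[
U_i \;=\; w_1(a^\star)\cdots w_i(a^\star)\,w_{i+1}(b^\star)\cdots w_q(b^\star), \qquad i=0,1,\ldots,q.
\]
Pigeonhole gives $0 \leq i < j \leq q$ with $c(U_i)=c(U_j)$, and then the variable word $W(x) = w_1(a^\star)\cdots w_i(a^\star)\,w_{i+1}(x)\cdots w_j(x)\,w_{j+1}(b^\star)\cdots w_q(b^\star)$ of length $N$ over $B$ satisfies $W(a^\star)=U_j$ and $W(b^\star)=U_i$, so $c(W(a^\star))=c(W(b^\star))$. For any other $a\in A$, one passes from $W(a)$ to $W(a^\star)$ by changing blocks $i+1,\ldots,j$ one at a time from $w_k(a)$ to $w_k(a^\star)$; at each step $(\ast_k)$ applies, since the prefix blocks are substituted by elements of $A$ and the suffix blocks are arbitrary elements of $B$. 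Hence $\{W(b):b\in B\}$ is monochromatic.

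The main obstacle will be the precise calibration of the stabilization property $(\ast_k)$: it has to be strong enough on the prefix side (allowing \emph{any} prior substitutions from $A$, not just the reference letter $a^\star$) and on the suffix side (allowing \emph{arbitrary} words in $B$, including those built from $w_{k+1}(b^\star),\ldots,w_q(b^\star)$) for the final block-by-block color transfer to succeed, while still being achievable by a single application of $HJ(p,\cdot)$. This is exactly what forces the color count $q^{p^{k-1}(p+1)^{N_{k+1}+\cdots+N_q}}$ at stage $k$ and the backward choice of block lengths.
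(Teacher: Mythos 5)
The paper does not actually prove Theorem \ref{thm1}: it cites it as the classical Hales--Jewett theorem from \cite{H-J} and uses it as a black box in Facts \ref{uncCl31} and \ref{Cl31}, so there is no ``paper proof'' to compare against. Your argument, taken on its own, is a correct single-induction-on-alphabet-size proof in the Shelah style, and it is sound. The palette count $q^{p^{k-1}(p+1)^{N_{k+1}+\cdots+N_q}}$ correctly counts the functions from $A^{k-1}\times B^{N_{k+1}}\times\cdots\times B^{N_q}$ to $\{1,\dots,q\}$, so the inductive-hypothesis application of $HJ(p,\cdot)$ to the ``behavior'' coloring of $A^{N_k}$ produces exactly the stabilization property $(\ast_k)$. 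The asymmetry in $(\ast_k)$ (prefix substitutions constrained to $A$, suffix arbitrary in $B$) is precisely the right calibration: when you pass from $W(a)$ to $W(a^\star)$ block by block from $i+1$ up to $j$, at the moment block $k$ is flipped the prefix blocks $1,\dots,k-1$ carry letters from $A$ (either $a^\star$ or $a$) and the suffix blocks carry arbitrary $B$-words (including the $b^\star$-blocks), so $(\ast_k)$ applies at each step. Combined with pigeonhole on the $q+1$ corners $U_0,\dots,U_q$ (which forces $i<j$, so $W(x)$ genuinely contains the variable), this gives the monochromatic combinatorial line. The only point left slightly implicit is the extension from $N=N_1+\cdots+N_q$ to all $N'\geq HJ(p+1,q)$, handled by fixing the trailing $N'-N$ coordinates to $b^\star$; that is standard. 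In short: nothing to compare against in the paper, and your proof is correct as proposed.
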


The Hales--Jewett theorem gave birth to a whole new branch of
research concerning extensions of it in the context
of both  finite and infinite alphabets (see \cite{C}, \cite{C-S},
\cite{F-K}, \cite{GR}, \cite{HMC}, \cite{MC}, \cite{PV}). For an exposition
of these results the reader can also refer to \cite{GRS}, \cite{Hb},
 \cite{MCb}, \cite{Tod}.

The  first  theorem that we will prove is  due to T. Carlson
\cite{C} and independently due to H. Furstenberg and Y. Katznelson
\cite{F-K} and is the following.

\begin{thm}\label{thm3f}
  Let  $A$ be a finite alphabet.
 Then for every finite coloring of $W(A)$ there exists a sequence
  $(t_n(x))_{n=0}^ \infty$ of variable words over $A$
  such that  for every $n\in\nn$ and every  $m_0<m_1<...<m_n$,
  the  words of the form $t_{m_0}(a_0) t_{m_1}(a_1) ... t_{m_n}(a_n)$ with
  $a_{i} \in A$ for all $0 \leq i \leq n$ are of the same color.
\end{thm}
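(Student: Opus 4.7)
The plan is to construct the sequence $(t_n(x))_{n\in\nn}$ recursively, using Theorem \ref{thm1} as the engine at each step, and to combine this with a pigeonhole argument across stages that forces a single color to serve all the required concatenations.

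The preparatory step is a \emph{stabilization lemma}: for every $q$-coloring $c$ of $W(A)$ and every finite set $P \subseteq W(A)$, there exists a variable word $t(x)$ such that, for each $p \in P$, the color $c(p\cdot t(a))$ is independent of $a\in A$. This follows from Theorem \ref{thm1} applied to the product coloring $C : W(A)\to\{1,\dots,q\}^{P}$, $C(w)=(c(p\cdot w))_{p\in P}$, which has $q^{|P|}$ colors: a variable word produced by Theorem \ref{thm1} applied to $C$ makes $\{C(t(a)):a\in A\}$ a singleton, i.e., simultaneously stabilizes $c$ after every prefix in $P$.

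The inductive construction then runs as follows. Having chosen $t_0,\dots,t_{n-1}$, let $P_n$ denote the finite set of all concatenations $t_{m_0}(a_0)\cdots t_{m_j}(a_j)$ with $m_0<\cdots<m_j\le n-1$, together with the empty word. Apply the stabilization lemma to $P_n$ to obtain $t_n$, so that for each $p \in P_n$ the value $c(p\cdot t_n(a))=f_n(p)$ depends only on $p$. The nonempty elements of $P_{n+1}$ are then the nonempty elements of $P_n$ together with the family $\{p\cdot t_n(a):p\in P_n,\; a\in A\}$, whose colors are controlled by $f_n$.

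\textbf{Main obstacle.} The principal difficulty is that $f_n(p)$ may depend nontrivially on $p$, and vary with $n$, whereas the theorem demands a single common color $i^{*}$ for every nonempty concatenation across all stages. The resolution is to interleave the stabilization step with a pigeonhole refinement: at each stage, Theorem \ref{thm1} is applied in a sufficiently rich ambient setting so that many admissible $t_n$'s are available, and one selects among them a $t_n$ whose color pattern $f_n$ takes a prescribed value, chosen in advance by pigeonhole on the finite set $\{1,\dots,q\}^{P_n}$. A K\"onig-type argument on the tree of admissible finite sequences sharing a common color $i^{*}$ then extracts the required infinite sequence. This fusion of Theorem \ref{thm1} with iterated pigeonhole is the technical heart of the combinatorial proof.
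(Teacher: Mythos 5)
Your stabilization lemma is correct and is indeed a standard first step: a product coloring and one application of Theorem~\ref{thm1} gives a variable word $t(x)$ such that $c(p\cdot t(a))$ is independent of $a$ for every $p$ in a prescribed finite set $P$. You also correctly identify the real difficulty: stabilization only makes the color depend on the prefix $p$, and nothing forces $f_n(p)$ to be constant in $p$ or consistent across stages $n$. But the paragraph you offer as the resolution of this obstacle is not a proof; it is a restatement of the obstacle.

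Concretely, the pigeonhole idea fails for two reasons. First, pigeonholing on $\{1,\dots,q\}^{P_n}$ can at best guarantee that \emph{some} pattern $f_n$ recurs among candidate $t_n$'s; it gives you no way to force $f_n \equiv i^*$ for a color $i^*$ fixed in advance, and $i^*$ must be the same for all $n$ because the theorem requires a single color for every concatenation. Second, the ``K\"onig-type argument on the tree of admissible finite sequences sharing a common color $i^*$'' is circular: to apply K\"onig you must already know this tree is infinite, i.e. that arbitrarily long finite sequences $t_0,\dots,t_{n-1}$ with all admissible concatenations colored $i^*$ exist — but that is precisely the inductive claim you have not established, since at no stage do you control the \emph{value} of the stabilized color, only that it is a value. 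The missing idea is a quantitative reason why, once $t_0,\dots,t_{n-1}$ have been chosen so that all their concatenations lie in the color class $E_{i^*}$, there must exist a next $t_n$ keeping every extended concatenation $p\, t_n(a)$ in $E_{i^*}$.

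This is exactly what the paper's machinery supplies and what your sketch lacks. The paper introduces a largeness notion (Definition~\ref{defnition__large}): $E$ is large in $\vs$ if $E$ meets the constant span of every infinite extracted subsequence of $\vs$. Fact~\ref{uncCl31}, proved via Hales--Jewett much as in your stabilization lemma, upgrades largeness to the existence of a full combinatorial line $\{w(a):a\in A\}\subseteq E$ inside the span. The decisive step is Lemma~\ref{Cl16}: if $E$ is large in $\vs$, one can choose a variable word $w(x)$ and a further subsequence $\vt$ so that $E\cap E_F$ (with $F=\{w(a):a\in A\}$ and $E_F=\{z: wz\in E\ \forall w\in F\}$) is again large in $\vt$. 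This ``conditional largeness survives'' property is what guarantees the next variable word can be found in the \emph{same} color class, and iterating it (Lemma~\ref{Cl20}, Corollary~\ref{corol22}) produces the sequence. Without something playing the role of $E_F$-largeness — i.e. without proving that the favorable choices of $t_n$ form a combinatorially rich set rather than merely a nonempty one — the induction cannot close, and your proposal as written has a genuine gap at its core.
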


Our approach for Theorem \ref{thm3f} can be extended in order to
provide a proof for a
 stronger version of it which concerns
infinite increasing
sequences of finite
alphabets and is the following.

\begin{thm}\label{thm3}
  Let $(A_n)_{n=0}^ \infty$  be an increasing  sequence of finite
  alphabets and let $A=\cup_{n \in \nn} A_n$.
 Then for every finite coloring of $W(A)$ there exists a sequence
  $(t_n(x))_{n=0}^ \infty$ of variable words over $A$
  such that for every $n\in\nn$ and every  $m_0<m_1<...<m_n$,
  the  words of the form $t_{m_0}(a_0) t_{m_1}(a_1) ... t_{m_n}(a_n)$ with
 $a_0 \in A_{m_0}, a_1\in A_{m_1}, ... , a_n \in A_{m_n}$ are of the same color.
\end{thm}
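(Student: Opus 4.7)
The plan is to construct the sequence $(t_n(x))_{n=0}^\infty$ by induction on $n$, invoking the finite Hales--Jewett theorem (Theorem \ref{thm1}) at stage $n$ with the alphabet $A_n$, and gluing the successive choices together by a fusion/diagonal argument. Throughout the construction we maintain, together with the partial sequence $t_0,\ldots,t_{n-1}$, an infinite ``reservoir'' of variable words over $A$ (concretely, a block-sequence of variable words from which all the future $t_m$, $m\geq n$, will be carved out).

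At the inductive step, having fixed $t_0,\ldots,t_{n-1}$, consider the induced finite coloring on candidate variable words $w(x)$ that records, for every ``pattern'' $m_0<\ldots<m_{k-1}<n$ and every assignment $a_i\in A_{m_i}$, the color $c(t_{m_0}(a_0)\cdots t_{m_{k-1}}(a_{k-1})w(a))$ for each $a\in A_n$. The set of patterns is finite (the $m_i$ lie in $\{0,\ldots,n-1\}$), each $A_{m_i}$ is finite, and $A_n$ is finite, so this is a legitimate finite coloring. A repeated application of Theorem \ref{thm1} with alphabet $A_n$, carried out inside a sufficiently long initial portion of the reservoir, produces a variable word $t_n(x)$ such that for every pattern and every prefix assignment the color of $t_{m_0}(a_0)\cdots t_{m_{k-1}}(a_{k-1})t_n(a)$ is independent of $a\in A_n$. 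The new reservoir is whatever part of the old one lies to the right of the segments used to build $t_n$.

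The main obstacle is the \emph{simultaneity} required at each stage: one must stabilize, in a single construction, the finitely many colorings coming from all patterns that end at $n$, \emph{while} leaving an infinite reservoir from which the future $t_{n+1},t_{n+2},\ldots$ can still be drawn. The standard device is to take $t_n(x)$ to be a long concatenation of variable segments pulled from the reservoir and to apply Theorem \ref{thm1} once per pattern, feeding the output of one application into the input of the next; the alphabet remains $A_n$ throughout and the reservoir shrinks by only finitely much. Once the sequence $(t_n(x))$ has been produced in this fashion, the required monochromaticity for arbitrary $m_0<\ldots<m_n$ and $a_i\in A_{m_i}$ follows by reading off the stage-$m_n$ stabilization, applied to the pattern $(m_0,\ldots,m_{n-1})$ with prefix letters $a_0,\ldots,a_{n-1}$. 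Theorem \ref{thm3f} is the special case $A_n=A$ for all $n$ and so is covered by the same argument without any separate treatment.
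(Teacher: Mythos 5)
Your inductive scheme has a genuine gap at the final step. At stage $n$ you choose $t_n(x)$ so that, for every pattern $m_0<\cdots<m_{k-1}<n$ and every fixed prefix assignment $(a_0,\ldots,a_{k-1})$, the color of $t_{m_0}(a_0)\cdots t_{m_{k-1}}(a_{k-1})t_n(a)$ is independent of the \emph{last} letter $a\in A_n$. That gives you, for each prefix, one constant color; but the theorem requires the color of $t_{m_0}(a_0)\cdots t_{m_n}(a_n)$ to be independent of \emph{all} the letters $a_0,\ldots,a_n$ (and in fact of the choice of $m_0<\cdots<m_n$: the paper proves that the entire constant span of $\vec t$ lands in a single color class). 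Two different prefixes $(a_0,\ldots,a_{k-1})$ and $(a_0',\ldots,a_{k-1}')$ may well yield two different constant colors, and nothing in your construction forces them to agree. Nor can you repair this with more Hales--Jewett applications at stage $n$: the only free object at that stage is $t_n$, while the earlier $t_{m_i}$ and the letters substituted into them are already fixed, so there is no variable word to run Theorem \ref{thm1} over in order to equalize colors across prefixes. Doing that naively would require stabilizing the full ``type'' of a word --- the color of every possible future extension --- which is infinite data and not a finite coloring.

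This is precisely what the paper's largeness machinery is for. Rather than stabilizing colors product by product, the paper fixes a single color class $E$ that is $k$-large in a block sequence (Fact \ref{Cl30}), and then constructs $(w_n)$ so that the invariant ``$E\cap E_{F_n}$ is $k_{n+1}$-large in the tail'' holds at every stage (Lemma \ref{unccl18}), where $F_n$ is the constant span of $w_0,\ldots,w_n$ and $E_F=\{z:wz\in E\text{ for all }w\in F\}$. This one invariant bundles into a single largeness statement all the future extension information you would otherwise have to carry as infinitely many types, and it delivers $<\vec t>_c\subseteq E$ directly (Corollary \ref{unccl19}) without ever comparing the colors of two distinct products. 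Your ``reservoir'' corresponds to the $\vec s_n$ in the paper's construction and the fusion idea is part of the right scaffolding, but the key missing ingredients are the operator $E_F$ and the notion of $k$-largeness; without them the stage-by-stage stabilization does not close.
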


Let us point out that there exist
easy counterexamples that show that a direct extension of Theorem \ref{thm3f}
for an infinite alphabet $A$ is false.
 Theorem \ref{thm3} is a consequence of a more general
result of T. Carlson (see \cite[Theorem 15]{C}). The original proofs
of Theorems \ref{thm3f} and \ref{thm3} are based on topological as
well as  algebraic notions of the Stone--{\v C}ech compactification
of the related structures. Our approach is strictly combinatorial
and relies on the classical Hales--Jewett theorem. It has its
origins in the proof of Hindman's theorem \cite{H} due to J. E.
Baumgartner \cite{B} and  is close in  spirit with the proof of
Carlson--Simpson's theorem \cite{C-S}. In particular, a proof of a
weaker version of Theorem \ref{thm3f}, given by R. McCutcheon in
\cite[\S 2.3]{MCb}, was the motivation for this note.

Clearly, Theorem \ref{thm3f} is a consequence of Theorem
\ref{thm3}. Although  the proofs of both theorems follow similar
arguments, Theorem \ref{thm3} is  more demanding and quite more
technical. For this reason and  in order to make the presentation
more clear, we have decided to start with  a detailed exposition
of Theorem \ref{thm3f} and then proceed to Theorem \ref{thm3}. The
present note is an updated and extended version of part of
\cite{Kar}.

\section{Proof of Theorem \ref{thm3f}}\label{main proof}

\subsection{Preliminaries} \label{Initializing the proof}
In this subsection we introduce some notation and
terminology that we will use for the proof of Theorem \ref{thm3f}.
 For the following, we fix  a finite alphabet $A$. Let
 $V(A)$ be the set of all \textit{variable words} (\textit{over}
 $A$). By $V^ {<\infty}(A) $ (resp. $V^ \infty(A)
$) we denote the set of all finite (resp. infinite) sequences of
variable words. Also let $V^{\leq\infty}(A)=V^ {<\infty}(A) \cup
V^ \infty(A)$. Generally, the elements of $V^{\leq\infty}(A)$ will
be denoted by $\vs,\vt, \vw$, etc. Also by $\nn= \{0,1,...\}$ we
denote the set of all non negative integers.

\subsubsection{Constant and variable  span of a sequence of variable words over  $A$}
 Let  $m\in \nn$ and
$(s_n(x))_{n=0}^m\in V^{<\infty}(A)$.

(a) The \textit{constant span of} $(s_n(x))_{n=0}^m$, denoted by
$<(s_n(x))_{n=0}^m >_c$, is defined to be the set
$$\bigcup_{n=0}^m\big\{s_{l_0}(a_0)...
s_{l_n}(a_n): 0 \leq l_0<...<l_n\leq m,\;
   a_0,  ... , a_n\in A \big\}.$$

(b) The \textit{variable span of}
$(s_n(x))_{n=0}^m$,  denoted by $<(s_n(x))_{n=0}^m
>_v$, is defined to be the set
\[V(A) \cap  \bigcup_{n=0}^m\big\{s_{l_0}(a_0)... s_{l_n}(a_n):
0 \leq l_0<...<l_n\leq m, \;
  a_0, ... , a_n\in A\cup \{x\}
\big\}.\] The above notation is naturally extended to infinite
sequences of variable words as follows. Let $(s_n(x))_{n=0}^
\infty  \in \V$.
 The  constant span of $(s_n(x))_{n=0}^ \infty$ is the
set
$$<(s_n(x))_{n=0}^ \infty >_c=\{s_{l_0}(a_0)... s_{l_n}(a_n):
n\in \nn,\ 0\leq l_0<...<l_n,\; a_0, ... , a_n\in A\}$$
 and the  variable span of $(s_n(x))_{n=0}^ \infty$,
  denoted by $<(s_n(x))_{n=0}^ \infty>_v$,
 is the set
\[ V(A) \cap
\big\{s_{l_0}(a_0)... s_{l_n}(a_n): n\in \nn,\ 0\leq
l_0<...<l_n,\;
 a_0, ... , a_n\in A\cup \{x\}\big\}.\] In the following  we
will also write $<\vs>_c$ (resp. $<\vs >_v$) to denote the constant
(resp. variable) span of an $\vs\in V^{\leq \infty}(A)$.

\subsubsection{Extracted subsequences  of a sequence of variable words}
We start with the following definition.

\begin{defn}\label{ublock} Let $\vs=(s_n(x))_{n=0}^ \infty\in \V$.

(a) Let $l\in \nn$ and  $\vt=(t_n(x))_{n=0}^l\in V^{<\infty}(A)$.
 We say that $\vt$ is a (finite)
extracted subsequence of $\vs$ if there exist $0=m_0<...<m_{l+1}$
such that
$$t_i(x)\in <(s_n(x))_{n=m_{i}}^{m_{i+1}-1} >_v,$$ for all  $0 \leq i \leq l$.

(b) Let  $\vt=(t_n(x))_{n=0}^\infty$. We say that $\vt$ is an
(infinite) extracted  subsequence of $\vs$ if for every $l\in \nn$
the sequence $(t_n(x))_{n=0}^l$ is a finite extracted   subsequence
of $\vs$.
\end{defn}
In the following we will write  $\vt \leq \vs$ whenever $\vt \in
V^{\leq \infty}(A)$, $\vs\in \V$ and $\vt$ is an extracted
subsequence of $\vs$. The next fact follows easily from the above
definitions.

\begin{fact}\label{Cf7}
 Let $\vs  \in \V$.
 \begin{enumerate}
   \item[(i)]  If  $\vt  \in V^{\leq \infty}(A)$ with  $\vt \leq
\vs$ then   $<\vt>_c \subseteq <\vs >_c$ and $<\vt >_v
  \subseteq <\vs>_v.$
  \item[(ii)]  If  $ \vw  \in V^{\leq \infty}(A)$ and
$\vt \in  \V$ with
  $\vw \leq  \vt\leq \vs$ then  $\vw \leq \vs.$
  \end{enumerate}
\end{fact}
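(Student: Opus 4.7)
The plan is to unfold the definitions in both parts: each statement reduces to bookkeeping about how substitution and concatenation interact with the witnessing partitions of $\nn$ that realise the extraction relations.

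For (i), I would fix integers $0=m_0<m_1<\dots$ witnessing $\vt\leq\vs$, so that $t_i(x)\in <(s_n(x))_{n=m_i}^{m_{i+1}-1}>_v$ for each $i$ in the index set of $\vt$. A typical element of $<\vt>_c$ has the form $t_{l_0}(a_0)\dots t_{l_n}(a_n)$ with $a_i\in A$. Writing each $t_{l_i}(x)$ as a product $s_{j_0}(b_0)\dots s_{j_k}(b_k)$ with indices in $[m_{l_i},m_{l_i+1})$ and each $b_p\in A\cup\{x\}$, the substitution of $a_i$ for $x$ turns this product into a string of $s_j(c)$'s with every $c\in A$. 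Concatenating over $i=0,\dots,n$ then produces a word in $<\vs>_c$, because the blocks $[m_{l_i},m_{l_i+1})$ are pairwise disjoint and strictly increasing in~$i$. The argument for $<\vt>_v\subseteq <\vs>_v$ is the same, except that one of the $a_i$ (or one of the $b_p$) is allowed to be $x$, and the resulting word remains in $V(A)$.

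For (ii), I would simply compose the two witnessing partitions. Let $0=m_0<m_1<\dots$ witness $\vt\leq\vs$ and $0=p_0<p_1<\dots$ witness $\vw\leq\vt$, and set $q_i:=m_{p_i}$. Then $(q_i)$ is strictly increasing with $q_0=0$. For each $i$ in the index set of $\vw$, the word $w_i(x)$ lies in $<(t_n(x))_{n=p_i}^{p_{i+1}-1}>_v$; replacing each $t_n(x)$ appearing in this representation by its own expression in $<(s_j(x))_{j=m_n}^{m_{n+1}-1}>_v$ (and using the same kind of substitution-and-concatenation argument as in (i)) exhibits $w_i(x)$ as an element of $<(s_j(x))_{j=q_i}^{q_{i+1}-1}>_v$. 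This is precisely the requirement $\vw\leq\vs$.

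I do not anticipate any real obstacle. The only point that needs a moment of care is verifying that the $s_j$-indices used to realise $w_i(x)$ all fall inside the single block $[q_i,q_{i+1})$; this is automatic because the $\vs$-blocks $[m_n,m_{n+1})$ for $n\in[p_i,p_{i+1})$ partition exactly the interval $[m_{p_i},m_{p_{i+1}}) = [q_i,q_{i+1})$.
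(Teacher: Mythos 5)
Your proposal is correct and is exactly the routine unfolding of Definition \ref{ublock} that the paper has in mind: the paper offers no written proof of Fact \ref{Cf7}, stating only that it ``follows easily from the above definitions,'' and your substitution-and-concatenation bookkeeping (including the composition $q_i=m_{p_i}$ for (ii)) is that intended argument. One tiny caution: when $\vt$ (or $\vw$) is infinite, the definition only provides a witnessing sequence for each finite initial segment, so you should either run your argument segment-by-segment (which suffices, since elements of the spans and the condition $\vw\leq\vs$ only involve finite initial segments) or first note that the finite witnesses can be chosen coherently (e.g.\ greedily, taking each $m_{i+1}$ minimal), which is true but not literally part of the definition.
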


\subsubsection{The notion of large families}
The next definition is crucial for the proof of Theorem
\ref{thm3f}.
\begin{defn}\label{defnition__large}
  Let $E \subseteq W(A)$  and $\vs\in\V$.
  Then $E$ will be called
  \textit{large} in $\vs$ if $E \cap <\vw>_c \neq \emptyset,$
  for every infinite extracted subsequence
  $\vw$ of $\vs$.\end{defn}

  We close this subsection with some properties of large
  families.

\begin{fact}\label{Cf13}
   Let $E \subseteq W(A)$ and
   $\vs=(s_n(x))_{n=0}^ \infty \in \V$
   such that $E$ is large in $\vs$. Then
 for every $\vt\leq \vs$ we have that $E$ is large in $\vt$.
\end{fact}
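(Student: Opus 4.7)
The plan is to reduce largeness in $\vt$ directly to largeness in $\vs$ by invoking the transitivity of the extracted-subsequence relation recorded in Fact \ref{Cf7}(ii). The key observation is that any infinite extracted subsequence of $\vt$ is, by transitivity, also an infinite extracted subsequence of $\vs$, so the nonempty intersection witnessing largeness in $\vs$ automatically witnesses largeness in $\vt$.

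Concretely, I would first note that largeness is only a nonvacuous condition for sequences in $\V$; in the case $\vt\in V^{<\infty}(A)$ there are no infinite extracted subsequences of $\vt$ to test, so the conclusion holds trivially. Assume therefore that $\vt\in \V$ and fix an arbitrary $\vw\in \V$ with $\vw\leq \vt$. Combining this with the hypothesis $\vt\leq \vs$, Fact \ref{Cf7}(ii) yields $\vw\leq \vs$. Since $E$ is large in $\vs$, it follows that $E\cap <\vw>_c\neq \emptyset$. As $\vw$ was arbitrary, $E$ is large in $\vt$.

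The proof presents no genuine difficulty: the whole argument is a direct formal consequence of the transitivity of $\leq$ combined with the universal quantification in Definition \ref{defnition__large}. The only minor point worth flagging is that the definition of largeness is literally phrased for $\vs\in \V$, so for the finite case one must interpret the conclusion vacuously (or, equivalently, implicitly restrict the Fact to infinite $\vt$).
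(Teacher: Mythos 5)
Your proof is correct and is exactly the argument the paper intends (the paper states Fact~\ref{Cf13} without proof, as it is an immediate consequence of the transitivity in Fact~\ref{Cf7}(ii) and the definition of largeness). Your side remark about restricting to $\vt\in\V$ is the right reading, since largeness is only defined for elements of $\V$.
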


Moreover, arguing by contradiction, we obtain the following.

\begin{fact}\label{Cl14}
   Let $E \subseteq W(A)$ and
  $\vec{s} \in \V$
   such that $E$ is large in $\vs$. Let $r\geq 2$   and let
   $E=\bigcup_{i=1}^r
   E_i$.
    Then there exist $1\leq i \leq r$ and  $\vt\leq \vs$
    such that $E_i$ is large in $\vt$.
\end{fact}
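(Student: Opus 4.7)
The plan is to argue by contradiction, exploiting the failure of largeness iteratively down a descending chain of extracted subsequences. Suppose, for contradiction, that for every $1 \leq i \leq r$ and every $\vt \leq \vs$ the set $E_i$ fails to be large in $\vt$. Unfolding Definition \ref{defnition__large}, this means that for each such $i$ and $\vt$ there exists an infinite $\vw \leq \vt$ with $E_i \cap \langle \vw \rangle_c = \emptyset$. I will use this to build inductively a sequence $\vs \geq \vt_1 \geq \vt_2 \geq \cdots \geq \vt_r$ of infinite extracted subsequences of $\vs$ so that $E_i \cap \langle \vt_i \rangle_c = \emptyset$ for every $1 \leq i \leq r$.

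For the base step, apply the assumption to $i=1$ and $\vt=\vs$ to obtain $\vt_1 \leq \vs$ with $E_1 \cap \langle \vt_1 \rangle_c = \emptyset$. For the inductive step, assume $\vt_{i-1}$ has been constructed. By Fact \ref{Cf7}(ii), the relation $\vt_{i-1} \leq \vs$ holds, so the contrary assumption applies with $\vt = \vt_{i-1}$ and yields an infinite $\vt_i \leq \vt_{i-1}$ with $E_i \cap \langle \vt_i \rangle_c = \emptyset$. Iterating $r$ times produces the desired chain.

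Now I would close the argument as follows. By Fact \ref{Cf7}(i), applied along the chain $\vt_r \leq \vt_{r-1} \leq \cdots \leq \vt_1$, we have $\langle \vt_r \rangle_c \subseteq \langle \vt_i \rangle_c$ for each $1 \leq i \leq r$, hence $E_i \cap \langle \vt_r \rangle_c = \emptyset$ for all $i$. Taking the union over $i$, it follows that
\[
E \cap \langle \vt_r \rangle_c \;=\; \bigcup_{i=1}^r \bigl(E_i \cap \langle \vt_r \rangle_c\bigr) \;=\; \emptyset.
\]
On the other hand, since $\vt_r \leq \vs$ and $E$ is large in $\vs$, Fact \ref{Cf13} gives that $E$ is large in $\vt_r$; instantiating Definition \ref{defnition__large} with the infinite extracted subsequence $\vw = \vt_r$ of itself forces $E \cap \langle \vt_r \rangle_c \neq \emptyset$, contradicting the previous display.

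There is essentially no serious obstacle here; the argument is a clean finite iteration. The only point requiring attention is that at each step the current subsequence $\vt_{i-1}$ must still satisfy $\vt_{i-1} \leq \vs$ in order for the contrapositive hypothesis to be invoked, which is precisely the content of Fact \ref{Cf7}(ii); and the final contradiction relies on the (trivially verified) fact that any $\vec{u} \in \V$ is itself an infinite extracted subsequence of itself, so that largeness of $E$ in $\vt_r$ produces a witness inside $\langle \vt_r\rangle_c$ directly.
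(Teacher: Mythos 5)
Your argument is correct and is exactly the contradiction argument the paper alludes to (the paper only states ``arguing by contradiction'' without details): negate the conclusion, build the descending chain $\vs\geq\vt_1\geq\dots\geq\vt_r$ of extracted subsequences avoiding $E_1,\dots,E_r$ in turn via Fact \ref{Cf7}(ii), and contradict largeness of $E$ in $\vs$ using Fact \ref{Cf7}(i). All the auxiliary points you flag (transitivity of $\leq$, monotonicity of constant spans, and $\vt_r$ being an extracted subsequence of $\vs$, which already yields $E\cap<\vt_r>_c\neq\emptyset$ directly) are handled correctly.
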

For the following fact, we will need the next definition.

\begin{defn}\label{dfnition}
Let $m \in \nn$ and $(s_n(x))_{n=0}^m \in V^{< \infty} (A)$. We set
$$[(s_n(x))_{n=0}^m ]_c= \big\{s_0(a_0)...s_m(a_m):
 a_0, ... , a_m\in A \big\}$$ and
$$[(s_n(x))_{n=0}^m]_v=V(A) \cap \big\{s_0(a_0)...s_m(a_m):
 a_0, ... , a_m\in A \cup \{x\} \big\}.$$
\end{defn}

The next fact is a direct application of the Hales--Jewett
theorem. In a sense, it is a strengthening of Definition \ref{defnition__large}.
More precisely, we have the following.

\begin{fact}\label{uncCl31}
Let $E \subseteq W(A)$ and $\vs=(s_n(x))_{n=0}^ \infty \in \V$
such that $E$ is large in $\vs$. Then there exist $m \in \nn$ and
$w(x) \in <(s_n(x))_{n=0}^m >_v$ such that $\{w(a): a\in
A\}\subseteq  E$.
\end{fact}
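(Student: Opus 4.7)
The plan is to argue by contradiction. Assume that for every $m \in \nn$ and every $w(x) \in <(s_n(x))_{n=0}^m >_v$, the set $\{w(a): a\in A\}$ fails to be contained in $E$. From this hypothesis I will manufacture an infinite extracted subsequence $\vt \leq \vs$ with $<\vt >_c \cap E = \emptyset$, which directly contradicts the largeness of $E$ in $\vs$.

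The subsequence $\vt = (t_k(x))_{k=0}^\infty$ will be built recursively in blocks. I will pick $0 = m_0 < m_1 < m_2 < \ldots$ and variable words $t_k(x) \in <(s_n(x))_{n=m_k}^{m_{k+1}-1} >_v$ in such a way that the following invariant holds at the end of stage $k$: for every $0 \leq l_0 < \ldots < l_{k-1} < k$ and every $a_0, \ldots, a_k \in A$, the constant word $t_{l_0}(a_0) \cdots t_{l_{k-1}}(a_{k-1}) t_k(a_k)$ avoids $E$. (For $k = 0$ this just says $t_0(a) \notin E$ for all $a \in A$.)

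At stage $k$, assuming $t_0, \ldots, t_{k-1}$ are already chosen, let $U_k$ be the finite set of all ``prefixes'' $u = t_{l_0}(a_0) \cdots t_{l_{k-1}}(a_{k-1})$ coming from earlier blocks (together with the empty prefix). For each $u \in U_k$ I define a $2$-coloring of $A^N$ (with $N := m_{k+1} - m_k$ to be chosen) according to whether the word $u\, s_{m_k}(a_0') \cdots s_{m_{k+1}-1}(a_{N-1}')$ lies in $E$. Combining these into a single coloring with $q_k := 2^{|U_k|}$ colors, choosing $N \geq HJ(|A|, q_k)$, and applying Theorem \ref{thm1} produces a variable word of length $N$ over $A$ whose $|A|$ instantiations all receive the same combined color. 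Reading this word as a substitution pattern in $s_{m_k}, \ldots, s_{m_{k+1}-1}$ yields $t_k(x) \in <(s_n(x))_{n=m_k}^{m_{k+1}-1} >_v$ such that for every $u \in U_k$ the set $\{u\, t_k(a) : a \in A\}$ is monochromatic with respect to the ``in $E$ / not in $E$'' coloring. Since $u\, t_k(x)$ is itself a variable word in $<(s_n(x))_{n=0}^{m_{k+1}-1} >_v$, the contradiction hypothesis forces the monochromatic color to be ``not in $E$,'' completing the inductive step.

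Finally, every element of $<\vt >_c$ has the form $t_{l_0}(a_0)\cdots t_{l_j}(a_j)$ for some $0 \leq l_0 < \ldots < l_j$ and $a_i \in A$, and stage $l_j$ of the construction guarantees that such a word is not in $E$; hence $<\vt >_c \cap E = \emptyset$, yielding the desired contradiction. The main obstacle I anticipate is the bookkeeping needed to invoke Hales--Jewett with a number of colors that grows with $k$, together with the small but essential verification that $u\, t_k(x)$ really does qualify as an element of $<(s_n(x))_{n=0}^{m_{k+1}-1} >_v$ (once $u$ is expanded as a concatenation of substituted variable words from earlier blocks, it becomes a concatenation of terms $s_n(\cdot)$ with strictly increasing indices); this is precisely what licenses the application of the contradiction hypothesis at each stage.
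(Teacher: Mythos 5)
Your proof is correct and follows essentially the same contradiction argument as the paper's: build the extracted subsequence block by block, at each stage product-color the next block of $\vs$ by $E$-membership of each concatenation with a prefix from the previously built span, and invoke Hales--Jewett to obtain a variable word whose instantiations give each prefix a constant color, which the contradiction hypothesis forces to be ``avoids $E$''. The only cosmetic difference is that you phrase the coloring on $A^N$ and the color count as $2^{|U_k|}$, while the paper colors $[(s_{n_0+i}(x))_{i=0}^{N-1}]_c$ directly and uses the explicit bound $2^{(|A|+1)^n}$ for $|U_k|$; these are interchangeable.
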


\begin{proof}
Assume to the contrary that the conclusion fails. By induction we
construct
  a sequence $\vw = (w_n(x))_{n=0}^ \infty \leq  \vs$ such
   that $<\vw>_c \subseteq E^c$
   which is a contradiction
   since $E$ is large in
   $\vs$. The general inductive step of the construction is as follows.
   Let $n \geq 1$ and assume that $(w_i(x))_{i=0}^{n-1} \leq  \vs$  and
   $<(w_i(x))_{i=0}^{n-1} >_c \subseteq E^c.$
Let $n_0 \geq 1$ be the least integer  satisfying $w_0(x)  ...
w_{n-1}(x) \in
   <(s_i(x))_{i=0}^{n_0-1}>_v$. We set
     $$N= HJ\big(|A|,2^{(|A|+1)^n}\big).$$
   To each  $w\in   [(s_{n_0+i}(x))_{i=0}^{N-1}]_c$
   we assign the set   $\big\{uw :
   u \in  <(w_i(x))_{i=0}^{n-1} >_c\cup \{\emptyset \} \big\}. $
It is easy to see that $|<(w_i(x))_{i=0}^{n-1} >_c\cup \{\emptyset
\}| \leq (|A|+1)^n.$ Therefore,
 since  either $ uw \in E$ or $uw\in E^c$,
 the above correspondence induces  a $2^{(|A|+1)^n}$-coloring
  of the set $ [(s_{n_0+i}(x))_{i=0}^{N-1}]_c.$
By  the choice of $N$, there exists
   a variable word $w(x) \in  [(s_{n_0+i}(x))_{i=0}^{N-1}]_v$
   such that for each
   $u \in
<(w_i(x))_{i=0}^{n-1}>_c \cup \{\emptyset \}$,
   the set $\{ uw(a): a \in
A\}$  either is included in $E$ or disjoint from $E$. By our
assumption, there is no $u \in <(w_i(x))_{i=0}^{n-1}>_c \cup
\{\emptyset \}$ satisfying the first alternative. So  setting
$w_n(x)=w(x)$ we easily see that
    $(w_i(x))_{i=0}^{n} \leq  \vs$
   and $<(w_i(x))_{i=0}^{n}>_c \subseteq
   E^c.$ The inductive step of the construction of $\vw$  is
   complete and as we have already mentioned in the beginning of
   the proof this leads to a contradiction.
\end{proof}

\subsection{The main arguments}

We pass now to the core of the proof.
 We will need  the next
definition.
\begin{defn}\label{dfnition_11} Let  $E$ and
$ F$ be non empty  subsets of  $W(A)$. We define
  \[E_F=\{z\in W(A):w z \in E \text{ for every }w\in
  F\}.\]
\end{defn}

\begin{lem}\label{Cl16}
   Let $E \subseteq W(A)$ and $\vs=(s_n(x))_{n=0}^ \infty \in \V$
    such that $E$ is large in $\vs$. Then there exist
    $ m\geq1$,
    $w(x) \in <(s_n(x))_{n=0}^{m-1}>_v$
   and $\vt \in \V$ with $\vt \leq (s_{n}(x))_{n=m}^\infty$ such that
    if we set $F=\{w(a):a \in A\}$ then
   $E\cap E_F$ is large in $\vt$.
\end{lem}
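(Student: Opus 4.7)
The plan is to first invoke Fact~\ref{uncCl31} to pin down a candidate variable word $w(x)$, and then to refine via Fact~\ref{Cl14} in order to locate the extracted sub-sequence $\vt$.

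First, apply Fact~\ref{uncCl31} to $E$ and $\vs$ to obtain $m_0 \ge 0$ and a variable word $w(x) \in \langle (s_n(x))_{n=0}^{m_0}\rangle_v$ with $F := \{w(a) : a \in A\} \subseteq E$. Set $m := m_0 + 1$ and $\vs' := (s_n(x))_{n=m}^{\infty}$. By Fact~\ref{Cf13}, $E$ remains large in $\vs'$.

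Second, refine by partitioning $E$ (viewed inside $\vs'$) according to which of the $|A|$ constant prefixes $w(a)$ leads back into $E$: for $z\in E$, put $T(z) := \{a \in A : w(a)z\in E\}\subseteq A$, and let $E^{T} := \{z\in E : T(z)=T\}$ for each $T\subseteq A$. This gives a partition of $E$ into at most $2^{|A|}$ pieces. By Fact~\ref{Cl14} applied to this partition, there exist $T_0\subseteq A$ and $\vt \le \vs'$ such that $E^{T_0}$ is large in $\vt$.

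I claim that $T_0 = A$. If so, then $E^{T_0} \subseteq E\cap E_F$ directly from the definitions, and since any superset of a large family is itself large, $E\cap E_F$ is large in $\vt$, which finishes the proof.

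To rule out $T_0\subsetneq A$, suppose for contradiction that some $a^*\in A\setminus T_0$ exists. By Fact~\ref{uncCl31} applied to $E^{T_0}$ (which is large in $\vt$), there is a variable word $w'(x)\in \langle (t_n(x))_{n=0}^{m'}\rangle_v$ such that $\{w'(a):a\in A\}\subseteq E^{T_0}$. By the definition of $E^{T_0}$ this forces $w(a^*)w'(a)\notin E$ for every $a\in A$, so the concatenated variable word $u(x) := w(a^*)w'(x)\in \langle\vs\rangle_v$ satisfies $\{u(a):a\in A\}\cap E = \emptyset$. From here the aim is to iterate this construction, combining Fact~\ref{uncCl31} with a Hales--Jewett-type colouring in the style of the proof of Fact~\ref{uncCl31} itself, in order to produce an infinite extracted sub-sequence $\vw\le\vs$ with $\langle\vw\rangle_c\cap E = \emptyset$, contradicting that $E$ is large in $\vs$.

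The main obstacle is precisely this last iteration: promoting a single ``bad'' variable word $u(x)$ into an entire extracted sub-sequence whose full constant span avoids $E$. At every inductive step one must, via a Hales--Jewett colouring, simultaneously control $\{l\cdot w_n(a):a\in A\}$ against $E$ for every $l$ in the accumulated constant span; arranging the bookkeeping so that the ``all in $E^c$'' branch of the Hales--Jewett alternative is forced at each stage is the delicate technical core of the argument.
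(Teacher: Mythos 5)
Your proof takes a genuinely different route from the paper's, but it has a real gap that I do not think can be closed in the form you propose.

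The paper's argument proceeds in the opposite order of quantifiers. Its Claim~1 shows that there is an $n_0$ such that for \emph{every} $z$ in the tail constant span $<(s_i(x))_{i=n_0+1}^{\infty}>_c$, \emph{some} $w(x)$ in the head variable span $<(s_i(x))_{i=0}^{n_0}>_v$ satisfies $\{w(a)z : a\in A\}\subseteq E$; only afterwards is a single $w(x)$ pinned down, by applying Fact~\ref{Cl14} to the finite family indexed by all of $L=<(s_n(x))_{n=0}^{m-1}>_v$. You instead commit to one particular $w(x)$ at the outset, namely the one delivered by Fact~\ref{uncCl31}, and then try to show that this fixed $w(x)$ already works. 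There is no reason this should be so: Fact~\ref{uncCl31} only guarantees $F=\{w(a):a\in A\}\subseteq E$ and says nothing about how the tail interacts with $F$. The correct $w(x)$ genuinely depends on the tail behaviour, which is exactly what the paper's Claim~1 and the subsequent pigeonhole over $L$ encode.

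Concretely, your reduction to the claim $T_0=A$ is the point where the argument breaks. After Fact~\ref{Cl14} you only know that some $E^{T_0}$ is large in some $\vt$, and nothing forces $T_0=A$. Your attempted contradiction for $T_0\subsetneq A$ produces a single variable word $u(x)=w(a^*)w'(x)$ with $\{u(a):a\in A\}\cap E=\emptyset$, and you correctly identify that this would need to be upgraded to an infinite extracted subsequence $\vw\leq\vs$ with $<\vw>_c\cap E=\emptyset$. But the ``badness'' of $u(x)$ lives entirely in its constant prefix $w(a^*)$, which occupies the initial block of indices $\{0,\dots,m_0\}$. In an extracted subsequence those initial indices can be consumed by at most one term, so the later terms $u_2(x), u_3(x),\dots$ cannot share that prefix; they, and all their concatenations, would have to avoid $E$ for entirely different reasons. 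The hypothesis that makes the Hales--Jewett alternative collapse to ``all in $E^c$'' in the proof of Fact~\ref{uncCl31} (namely, that no $w$ with $\{w(a)\}\subseteq E$ exists anywhere) is simply not available here — indeed $E^{T_0}$ is large, which by Fact~\ref{uncCl31} keeps supplying words that \emph{are} in $E$. So the iteration you sketch has no mechanism to force the desired branch, and I see no way to complete it. The fix is to prove the paper's Claim~1 first, which quantifies over all $z$ before choosing $w(x)$, rather than committing to $w(x)$ up front.
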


\begin{proof}
   We start with the following claim.
\medskip

 \noindent\textit{Claim 1.}
    There exists $n_0 \in \nn$ such that for every  $z\in  <(s_i(x))_{i=n_0+1}^{\infty}>_c$
       there exists $w(x)$ in $<(s_i(x))_{i=0}^{n_0} >_v$    such that $\{w(a) z: a\in A \} \subseteq E.$

 \begin{proof}[Proof of Claim 1]  Assume that the claim is not true.
 Then for every $n\in \nn$ there exists $z\in  <(s_i(x))_{i=n+1}^{\infty}>_c$
 such that for every $w(x)\in <(s_i(x))_{i=0}^{n} >_v$, the set $\{w(a) z: a\in A \}$
  is not contained in  $E.$ Using this assumption  we easily  find  a strictly increasing
  sequence $(k_n)_{n=0}^\infty$ in $\nn$ with $k_0=0$ and a sequence $(z_n)_{n=0}^\infty$
   in $W(A)$ such
   the following are
  satisfied.
  \begin{enumerate}
  \item[(i)] For every $n\in\nn$, we have $z_n \in < (s_i(x))_{i=k_n+1}^{k_{n+1}-1}>_c$.

  \item[(ii)] For every $n\in\nn$ and every variable word $u(x)\in<(s_i(x))_{i=k_0}^{k_{n}}>_v$
   we have that $\{u(a)z_n:a\in A\} \nsubseteq E.$
  \end{enumerate}
 For every $n\in\nn$, we set $v_n(x)=s_{k_n}(x) z_n.$

 By (i) we get that $(s_{k_0}(x)z_0,...,s_{k_n}(x)z_n)\leq \vs,$
   for all $n\in\nn$ and therefore,   $(v_n(x))_{n=0}^\infty \leq
\vs $. Moreover, notice that  every $w(x) \in<(v_n(x))_{n=0}^\infty
>_v$ is of the form $w(x)=u(x)z_n$, for some unique $n\in\nn$ and $$u(x)\in<(v_0(x),...,v_{k_{n-1}}(x), s_{k_n}(x)
>_v.$$ Hence, since $<(v_0(x),...,v_{k_{n-1}}(x), s_{k_n}(x)
>_v\subseteq<(s_i(x))_{i=k_0}^{k_{n}}>_v$,
 by (ii) we get that   $\{w(a):a \in A\} \nsubseteq E,$
 for every $w(x) \in<(v_n(x))_{n=0}^\infty
>_v$.   But since
   $(v_n(x))_{n=0}^\infty  \leq  \vs $,  $E$ is large in
   $(v_n(x))_{n=0}^\infty $ and so by  Fact \ref{uncCl31} we arrive to a contradiction.
\end{proof}
We set $m=n_0+1$.  Also, let $L=<(s_n(x))_{n=0}^{m-1}>_v $ and  for
every $ w(x) \in L$, let $F(w(x)) =\{ w(a): a \in A\}.$

By Claim 1, we have that $<(s_i(x))_{i=m}^{\infty}>_c\subseteq
\bigcup_{w(x)\in L}E_{F(w(x))}$ and therefore, $$E\cap
<(s_i(x))_{i=m}^{\infty}>_c \subseteq \bigcup_{w(x)\in L}E\cap E_{F(w(x))}.$$
Hence, $\bigcup_{w(x)\in L}E\cap E_{F(w(x))}$ is large in
$(s_i(x))_{i=m}^{\infty}$. So,  by Fact \ref{Cl14}, there exist
$w(x)\in L$ and $\vt \leq (s_n(x))_{n=m}^\infty$ such that
 $E\cap E_{F(w(x))}$ is
large in $\vt$, as desired.
\end{proof}

\begin{lem}\label{Cl20}  Let $E \subseteq W(A)$ and $\vs\in \V$ such that
   $E$ is large in $\vs$. Then there exists a sequence
   $(w_n(x))_{n=0}^ \infty \leq \vs $ such that setting
$F_n=< (w_i(x))_{i=0}^n>_c$ we have that
 $E\cap E_{F_n}$ is
large in $(w_{i}(x))_{i=n+1}^\infty$, for all $n \in \nn$.
\end{lem}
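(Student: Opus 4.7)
The plan is to iterate Lemma \ref{Cl16} and produce the sequence $(w_n(x))_{n=0}^\infty$ one term at a time, while maintaining, as an auxiliary datum, a nested sequence of infinite extracted subsequences on which the relevant ``shifted'' set remains large. Concretely, I will construct by induction on $n \in \nn$ variable words $w_0(x), w_1(x), \ldots$ together with infinite extracted subsequences $\vt^{(-1)} = \vs \geq \vt^{(0)} \geq \vt^{(1)} \geq \cdots$ of $\vs$ such that, at each stage $n$, the word $w_n(x)$ lies in the variable span of some initial segment of $\vt^{(n-1)}$, the sequence $\vt^{(n)}$ is an extracted subsequence of the remaining tail of $\vt^{(n-1)}$, and $E \cap E_{F_n}$ is large in $\vt^{(n)}$.

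For the base case $n=0$ I would apply Lemma \ref{Cl16} directly to $E$ and $\vs$: since $F_0 = \langle w_0(x)\rangle_c = \{w_0(a) : a \in A\}$, the output of the lemma is exactly the statement that $E \cap E_{F_0}$ is large in the resulting $\vt^{(0)}$. For the inductive step, assume the data up to $n-1$ have been constructed so that $E \cap E_{F_{n-1}}$ is large in $\vt^{(n-1)}$. Then I would apply Lemma \ref{Cl16} with the set $E \cap E_{F_{n-1}}$ in place of $E$ and with $\vt^{(n-1)}$ in place of $\vs$. This produces an integer $m \geq 1$, a variable word $w_n(x) \in \langle (\vt^{(n-1)}_i)_{i=0}^{m-1}\rangle_v$ and $\vt^{(n)} \leq (\vt^{(n-1)}_i)_{i=m}^\infty$ such that
$$(E \cap E_{F_{n-1}}) \cap (E \cap E_{F_{n-1}})_{\{w_n(a):a \in A\}} \quad \text{is large in } \vt^{(n)}.$$

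The key algebraic identity that makes the induction close up is
$$(E \cap E_{F_{n-1}}) \cap (E \cap E_{F_{n-1}})_{\{w_n(a):a \in A\}} \;=\; E \cap E_{F_n}.$$
This is an unpacking of Definition \ref{dfnition_11} together with the description
$$F_n = F_{n-1} \;\cup\; \{w_n(a) : a \in A\} \;\cup\; \{u\, w_n(a) : u \in F_{n-1},\ a \in A\},$$
which itself follows from the definition of $F_n = \langle (w_i(x))_{i=0}^n\rangle_c$ by separating words that do not use $w_n$ from those whose last block is $w_n(a)$. The three clauses on the left-hand side of the identity correspond precisely to these three layers: $z \in E \cap E_{F_{n-1}}$ handles $E$ itself and $F_{n-1}$; the requirement $w_n(a) z \in E$ handles the singletons $\{w_n(a) : a \in A\}$; and $w_n(a) z \in E_{F_{n-1}}$ handles the products $u\, w_n(a)$.

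Finally, I would verify that the sequence $(w_n(x))_{n=0}^\infty$ so produced is an extracted subsequence of $\vs$. This is straightforward bookkeeping: the integers $m_0 < m_1 < \cdots$ in $\nn$ witnessing that $(w_i(x))_{i=0}^l \leq \vs$ (in the sense of Definition \ref{ublock}) are read off from the indices at which each $\vt^{(n)}$ is extracted, using Fact \ref{Cf7}(ii) to pass along the nested chain $\vt^{(n)} \leq \vt^{(n-1)} \leq \cdots \leq \vt^{(0)} \leq \vs$. I expect the only real step to require care is the algebraic identity above; everything else is direct iteration plus a verification of the nesting.
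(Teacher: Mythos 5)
Your construction is the same as the paper's: iterate Lemma~\ref{Cl16}, keeping an auxiliary nested chain of tails on which the successively refined set stays large, and close the induction with the identity
$$(E \cap E_{F_{n-1}}) \cap (E \cap E_{F_{n-1}})_{\{w_n(a):a\in A\}} = E \cap E_{F_n},$$
whose derivation (decomposing $F_n$ into words avoiding $w_n$, the singletons $w_n(a)$, and products $u\,w_n(a)$) is correct. However, your final verification stops short of what the lemma actually asserts. You only announce that you will check $\vw \leq \vs$, whereas the conclusion requires that $E\cap E_{F_n}$ be large in $(w_i(x))_{i=n+1}^\infty$ --- and what the inductive construction gives you is that $E\cap E_{F_n}$ is large in $\vt^{(n)}$, a different sequence. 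The missing link is to observe that, by the same bookkeeping (Fact~\ref{Cf7}), the \emph{tail} $(w_i(x))_{i=n+1}^\infty$ is an extracted subsequence of $\vt^{(n)}$ for every $n$, and then invoke Fact~\ref{Cf13} to transfer largeness of $E\cap E_{F_n}$ from $\vt^{(n)}$ to $(w_i(x))_{i=n+1}^\infty$. This is precisely what the paper does (``moreover, for every $n$, $(w_i(x))_{i=n}^\infty \leq \vs_n$''). It is the same nesting argument you already describe, but you must apply it uniformly to every tail, not just to the whole sequence against $\vs$; otherwise the proof does not reach the stated conclusion.
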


\begin{proof}
Iterating Lemma \ref{Cl16} we obtain
   a sequence $(w_n(x))_{n=0}^ \infty$ of variable words,  a
sequence $(\vs_n)_{n=0}^ \infty$ in  $ \V$ with $\vs_0 =\vs$ and a
sequence  $(m_n)_{n=0}^ \infty$ in $ \nn $, with $m_0=0$ and
$m_{n}\geq 1$ for all $n \geq 1$, such that setting
$\vs_n=(s_i^{(n)}(x))_{i=0}^\infty$ then  the following  are
satisfied.
\begin{enumerate}

\item[(i)]
 $w_{n}(x) \in <(s^{(n)}_i(x))_{i=0}^{m_{n+1}-1}>_v$.

\item[(ii)] $\vs _{n+1}$ is an extracted  subsequence of
 $(s_{i}^{(n)}(x))_{i=m_{n+1}}^\infty$.

\item[(iii)]  The set $E\cap E_{F_{n}}$ is large in $\vs_{n+1}$.
\end{enumerate}
The above construction is straightforward; we only mention that
for the proof of (iii) we use the following identity.
$$(E \cap E _{F_{n}} ) \cap (E \cap E _{F_{n}} )_{<w_{n+1}(x)>_c}
= E \cap E _{F_{n+1}}.$$ By conditions (i) and (ii) we easily see
that $\vw\leq \vs$ and moreover, for every $n \in \nn$,
$(w_{i}(x))_{i=n}^\infty \leq \vs _n $.  Hence by condition (iii)
 and Fact
 \ref{Cf13} we obtain that $E\cap E_{F_{n}}$ is large in
$(w_{i}(x))_{i=n+1}^\infty$, for all $n \in \nn$.
\end{proof}

\begin{cor}\label{corol22}
   Let  $E \subseteq W(A)$ and $\vs \in \V$ such that
   $E$ is large in $\vs$.
   Then there exists an extracted
    subsequence $\vec{t}=(t_n(x))_{n=0}^\infty$ of $\vs$
   such that
    $<\vec{t}>_c \subseteq E.$
\end{cor}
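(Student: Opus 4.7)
The plan is to apply Lemma \ref{Cl20} to obtain an extracted subsequence $\vw=(w_n(x))_{n=0}^\infty \leq \vs$ along which the shifted families $E\cap E_{F_n}$ (with $F_n=<(w_i(x))_{i=0}^n>_c$) are each large in the tail $(w_i(x))_{i=n+1}^\infty$. From this I will construct the desired $\vt=(t_n(x))_{n=0}^\infty$ by recursion, selecting each $t_n(x)$ as a variable word extracted from a finite block of $\vw$ strictly later than the blocks used for $t_0,\dots,t_{n-1}$, and arranging that $<(t_i(x))_{i=0}^n>_c\subseteq E$ at every stage. Taking the union over $n$ then yields $<\vt>_c\subseteq E$, while $\vt\leq\vw\leq\vs$ by Fact \ref{Cf7}(ii).

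For the base case, Fact \ref{Cf13} gives that $E$ is large in $\vw$, so Fact \ref{uncCl31} provides $m_0\in\nn$ and $t_0(x)\in <(w_i(x))_{i=0}^{m_0}>_v$ with $\{t_0(a):a\in A\}\subseteq E$; set $k_1=m_0+1$. For the inductive step, suppose $t_0(x),\dots,t_{n-1}(x)$ and $k_n\in\nn$ have been chosen so that $(t_i(x))_{i=0}^{n-1}$ is extracted from $(w_i(x))_{i=0}^{k_n-1}$ and $G_n:=\,<(t_i(x))_{i=0}^{n-1}>_c\,\subseteq E$. By Fact \ref{Cf7}(i) we have $G_n\subseteq F_{k_n-1}$, hence $E\cap E_{F_{k_n-1}}\subseteq E\cap E_{G_n}$ directly from Definition \ref{dfnition_11}, and since the smaller family is large in $(w_i(x))_{i=k_n}^\infty$ by Lemma \ref{Cl20}, so is the larger one. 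Applying Fact \ref{uncCl31} to $E\cap E_{G_n}$ and the sequence $(w_i(x))_{i=k_n}^\infty$ produces $m_n\in\nn$ and $t_n(x)\in <(w_i(x))_{i=k_n}^{k_n+m_n}>_v$ with $\{t_n(a):a\in A\}\subseteq E\cap E_{G_n}$. Set $k_{n+1}=k_n+m_n+1$; then the extraction condition on $(t_i(x))_{i=0}^n$ is preserved. Moreover, for every $a\in A$ and every $u\in G_n$, the word $u\cdot t_n(a)$ lies in $E$ by the definition of $E_{G_n}$, and $t_n(a)\in E$ directly, so together with the inductive hypothesis this gives $<(t_i(x))_{i=0}^n>_c\subseteq E$, closing the recursion.

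The main technical point is the bookkeeping with the indices $k_n$: each $t_n(x)$ must be drawn from a finite block of $\vw$ lying strictly beyond the blocks already consumed, both so that $(t_i(x))_{i=0}^n$ is a legitimate extracted subsequence of $\vw$ and so that the tail to which we apply Fact \ref{uncCl31} is one where Lemma \ref{Cl20} supplies largeness of $E\cap E_{F_{k_n-1}}$. The inclusion $G_n\subseteq F_{k_n-1}$ is precisely what converts the shift-type information encoded in Lemma \ref{Cl20} into the pointwise containment $<\vt>_c\subseteq E$.
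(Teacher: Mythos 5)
Your proposal is correct and follows essentially the same route as the paper's own proof: obtain $\vw$ from Lemma \ref{Cl20}, then recursively apply Fact \ref{uncCl31} to $E\cap E_{G_n}$ on successive tails of $\vw$, using the inclusion $G_n\subseteq F_{k_n-1}$ (hence $E_{F_{k_n-1}}\subseteq E_{G_n}$) to transfer the largeness supplied by Lemma \ref{Cl20}. The bookkeeping with the indices $k_n$ and the verification that $<(t_i(x))_{i=0}^{n}>_c\subseteq E$ match the paper's argument, which the paper merely abbreviates with ``proceeding similarly.''
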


\begin{proof}
 Let $\vec{w}=(w_n(x))_{n=0}^ \infty$
  be the sequence obtained in Lemma
 \ref{Cl20}. Since $E$ is large in $\vs$ and
 $\vw \leq \vs$, we obtain that $E $ is large in $\vw$ and
 therefore, by Fact \ref{uncCl31}, we have that there
 exist $r_1 \geq 1$ and $t_{0}(x) \in
 <(w_{i}(x))_{i=0}^{r_1-1}>_v$
 such that $\{ t_0(a): a \in A \}
 \subseteq E$. We set $ G_0= <t_0(x)>_c$ and
 $F= <(w_i(x)_{i=0}^{r_1-1}>_c$. By Lemma \ref{Cl20},
 we have that $E \cap E _F$ is large in $(w_i(x))_{i=r_1}^
 \infty.$ Since $G_0 \subseteq F$ we get that
 $E _F \subseteq E_{G_0}$ and hence, $E \cap E_{G_0}$ is large $(w_i(x))_{i=r_1}^
 \infty$. Using again Fact \ref{uncCl31}, applied for $E \cap E_{G_0}$, we find $r_2 > r_1$ and a variable word
 $t_1(x) \in < (w_i(x))_{i=r_1} ^ {r_2-1}>_v $ such that $<t_1(x)>_c \subseteq E \cap
 E_{G_0}$. We set $G_1=<(t_0(x),t_1(x))>_c$ and we notice that
 $G_1 \subseteq E$ and also that
 $E\cap E_{G_1}$ is large in $(w_i(x))_{i=r_2}^\infty$.
Proceeding similarly, we construct a sequence $ \vt
=(t_n(x))_{n=0}^\infty \leq \vw \leq \vs $ such that
$<(t_i(x))_{i=0}^{n}>_c \subseteq E$, for all $n \in \nn$.
Hence, $\vt \leq \vs$ and $<\vt>_c \subseteq E$, as desired.
\end{proof}

\begin{proof}[\textbf{Proof of Theorem \ref{thm3f}}] Let $r \geq 2 $ and
let $W(A) = \cup _{i=1}^r E_i$. Let $\vec{v}=(x,x,...)$. Then
 $<\vec{v}>_c = W(A)=\cup_{i=1}^r E_i$.
Trivially,  $\cup_{i=1}^r E_i$ is
  large in
  $\vec{v}$.
Hence, by Fact \ref{Cl14}, there
  exist $1 \leq i \leq r$ and $\vs \leq \vec{v} $
  such that $E_i$ is large in $\vs$. By Corollary \ref{corol22},
  there exists
   $\vec{t} \leq  \vs$
   such that
    $<\vec{t}>_c \subseteq E_i$ and the proof is complete.
\end{proof}

\section{Proof of Theorem \ref{thm3}}

In this section we present the proof of Theorem \ref{thm3}. As
mentioned in the Introduction, the arguments are similar to those
of Theorem \ref{thm3f}. The main difficulty that we encountered
was the manipulation of the infinite sequence of finite alphabets
which, among others, it increases the complexity of the notation.
We start by reformulating the basic terminology from Section
\ref{main proof}, by taking into consideration the infinite
sequence of alphabets.

\subsection{Preliminaries}
We fix  an increasing  sequence
 $$A_0\subseteq A_1 \subseteq ... \subseteq A_n \subseteq ...$$
 of finite alphabets and we set $$A=\bigcup_{n\in \nn} A_n.$$
 Let
 $V(A)$ be the set of all \textit{variable words} (\textit{over}
 $A$).
 By $V^ {<\infty}(A) $ (resp. $V^ \infty(A)
$) we denote the set of all finite (resp. infinite) sequences of
variable words. Also let $V^{\leq\infty}(A)=V^ {<\infty}(A) \cup
V^ \infty(A)$.

\subsubsection{Constant and variable span of a sequence of variable words with respect to a sequence
of finite alphabets}
 Let  $m\in \nn$,
$(s_n(x))_{n=0}^m$ in $V^{<\infty}(A)$ and $(k_n)_{n=0}^m$ be a
strictly increasing finite sequence of non negative integers.
 The \textit{constant span of}
$(s_n(x))_{n=0}^m$ \textit{with respect to} $(A_{k_n})_{n=0}^m$
denoted by $<(s_n(x))_{n=0}^m \  \| \ (A_{k_n})_{n=0}^m>_c$
is defined to be the set
$$\bigcup_{n=0}^m\{s_{l_0}(a_0)... s_{l_n}(a_n):
0 \leq l_0<...<l_n\leq m,\;
  a_i\in A_{k_{l_i}},
\  0\leq i \leq n \}.$$ We also define the \textit{variable span of}
$(s_n(x))_{n=0}^m$ \textit{with respect to} $(A_{k_n})_{n=0}^m$,
denoted by $<(s_n(x))_{n=0}^m \  \| \ (A_{k_n})_{n=0}^m>_v$, to be the set
\[V(A) \cap\bigcup_{n=0}^m\{s_{l_0}(a_0)... s_{l_n}(a_n):
0 \leq l_0<...<l_n\leq m,
  a_i\in A_{k_{l_i}}\cup\{x\},
 0\leq i \leq n \}.\] The above notation is extended  to
infinite sequences of variable words as follows. Let
$(s_n(x))_{n=0}^ \infty \in \V$ and $(k_n)_{n=0}^ \infty$ be a
strictly increasing sequence of non negative integers. Then the
constant span of $(s_n(x))_{n=0}^ \infty$ with respect to
$(A_{k_n})_{n=0}^ \infty$ denoted by $<(s_n(x))_{n=0}^ \infty \  \|
\ (A_{k_n})_{n=0}^ \infty>_c$ is defined to be the set
$$\{s_{l_0}(a_0)... s_{l_n}(a_n):
n\in \nn,\ 0\leq l_0<...<l_n,\; a_i\in A_{k_{l_i}}, 0\leq i\leq n
\}.$$ Similarly,  the  variable span of $(s_n(x))_{n=0}^ \infty$
with respect to $(A_{k_n})_{n=0}^ \infty$, denoted by
$<(s_n(x))_{n=0}^ \infty \  \| \ (A_{k_n})_{n=0}^ \infty>_v$, is the
set
\[
V(A) \cap \{s_{l_0}(a_0)... s_{l_n}(a_n): n\in \nn,0\leq
l_0<...<l_n, a_i\in A_{k_{l_i}}\cup\{x\}, 0\leq
i\leq n \}.\] In the following  we also write $<\vs \  \| \
(A_{k_n})_{n=0}^ \infty>_c$ (resp. $<\vs \  \| \ (A_{k_n})_{n=0}^
\infty>_v$) to denote the the  constant (resp. variable) span of
$\vs=(s_n(x))_{n=0}^ \infty$ with respect to $(A_{k_n})_{n=0}^
\infty$.

\subsubsection{Extracted $k$-subsequences  of a sequence of variable words}
In this subsection, we extend the notion of extracted subsequences defined
in Section \ref{main proof}.
\begin{defn}\label{iublock}
Let $k \in \nn$ and $\vs=(s_n(x))_{n=0}^ \infty\in \V$.

(a) Let $l\in \nn$ and
   $\vt=(t_n(x))_{n=0}^l\in V^{<\infty}(A)$. We say that
 $\vt$ is a
(finite) extracted $k$-subsequence of $\vs$ if there exist
$0=m_0<...<m_{l+1}$ such that
$$t_i(x)\in <(s_n(x))_{n=m_{i}}^{m_{i+1}-1} \ \| \
(A_{k+n})_{n=m_{i}}^{m_{i+1}-1}>_v,$$ for all $0 \leq i \leq l$.

(b) Let $\vt=(t_n(x))_{n=0}^\infty \in \V$. We say that $\vt$ is a
(infinite) extracted  $k$-subsequence of $\vs$ if for every $l\in
\nn$, the sequence $(t_n(x))_{n=0}^l$ is a finite extracted
$k$-subsequence of $\vs$.
\end{defn}

In the following we will write  $\vt \leq_k \vs$, whenever $\vt \in
V^{\leq \infty}(A)$, $\vs\in \V$ and $\vt$ is an extracted
$k$-subsequence of $\vs$. Taking into account that  the sequence of
alphabets $(A_n)_{n=0}^ \infty$ is increasing, the next fact follows
easily from the
 above definitions.
\begin{fact}\label{Cf25}
 Let $k, l  \in \nn$ and $\vs,\vt,\vw  \in \V$.
 \begin{enumerate}
   \item[(i)]  If  $\vt \leq _k
\vs$ then   $$<\vt \; \| \; (A_{k+n})_{n=0}^ \infty>_c
\subseteq <\vs\;\|\;
  (A_{k+n})_{n=0}^ \infty>_c$$ and $$<\vt \; \| \;
  (A_{k+n})_{n=0}^ \infty>_v \subseteq <\vs\;\|\;
(A_{k+n})_{n=0}^ \infty>_v.$$

\item[(ii)] If
 $k\leq l $ and  $\vw \leq _{k} \vt\leq_{l} \vs$ then
 $\vw \leq_{l} \vs.$
 \end{enumerate}
\end{fact}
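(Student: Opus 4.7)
The plan is to unravel both definitions directly, relying on two elementary observations: (a) any strictly increasing sequence $(m_i)_{i=0}^\infty$ in $\nn$ with $m_0=0$ satisfies $m_i\geq i$; and (b) the alphabet sequence is monotone, so $A_j\subseteq A_{j'}$ whenever $j\leq j'$. Together these imply that a letter carried in $A_{k+l_i}$ and later inserted at a deeper position $j\geq m_{l_i}$ automatically lies in $A_{k+j}$, which is exactly what is needed to keep expansions inside the appropriate spans.

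For (i), suppose $\vt=(t_n(x))_{n=0}^\infty\leq_k\vs=(s_n(x))_{n=0}^\infty$, witnessed by $0=m_0<m_1<\cdots$. I would pick a generic element $t_{l_0}(a_0)\cdots t_{l_n}(a_n)$ of $<\vt\,\|\,(A_{k+n})_{n=0}^\infty>_c$ (so each $a_i\in A_{k+l_i}$) and expand each factor as $t_{l_i}(x)=s_{j_0}(b_0)\cdots s_{j_p}(b_p)$ with $m_{l_i}\leq j_0<\cdots<j_p\leq m_{l_i+1}-1$ and $b_q\in A_{k+j_q}\cup\{x\}$. Substituting $a_i$ for each occurrence of $x$ produces letters in $A_{k+l_i}$; since $j_q\geq m_{l_i}\geq l_i$ by (a), the inclusion $A_{k+l_i}\subseteq A_{k+j_q}$ from (b) places every substituted letter where needed, and concatenation over $i$ yields an element of $<\vs\,\|\,(A_{k+n})_{n=0}^\infty>_c$. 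The variable-span inclusion is handled identically, the only change being that some $a_i$ may equal $x$, which simply remains $x$ after substitution and preserves the variable character of the word.

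For (ii), let $\vw\leq_k\vt$ be witnessed by $0=n_0<n_1<\cdots$ and $\vt\leq_l\vs$ by $0=m_0<m_1<\cdots$. I would set $p_i:=m_{n_i}$, so $0=p_0<p_1<\cdots$. Given a generic $w_i(x)=t_{j_0}(b_0)\cdots t_{j_q}(b_q)$ with $n_i\leq j_0<\cdots<j_q\leq n_{i+1}-1$ and $b_r\in A_{k+j_r}\cup\{x\}$, I would expand each $t_{j_r}(x)=s_{p_{r,0}}(c_{r,0})\cdots s_{p_{r,q_r}}(c_{r,q_r})$ with $m_{j_r}\leq p_{r,s}\leq m_{j_r+1}-1$ and $c_{r,s}\in A_{l+p_{r,s}}\cup\{x\}$, and then substitute $b_r$ for each occurrence of $x$. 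By (a), $p_{r,s}\geq m_{j_r}\geq j_r$, so combined with $k\leq l$ we obtain $A_{k+j_r}\subseteq A_{l+p_{r,s}}$, placing the substituted letter into $A_{l+p_{r,s}}$ as required. The composite index set $\{p_{r,s}\}$ is strictly increasing because $m_{j_r+1}-1<m_{j_{r+1}}$, and lies in $[m_{j_0},m_{j_q+1}-1]\subseteq[m_{n_i},m_{n_{i+1}}-1]=[p_i,p_{i+1}-1]$. Hence $w_i(x)\in\,<(s_p(x))_{p=p_i}^{p_{i+1}-1}\,\|\,(A_{l+p})_{p=p_i}^{p_{i+1}-1}>_v$, which is exactly $\vw\leq_l\vs$.

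The only real obstacle is the notational bookkeeping of nested index sequences; the mathematical content in both parts reduces to observations (a) and (b) above, which is why the paper simply remarks that the fact ``follows easily'' from the definitions.
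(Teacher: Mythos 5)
Your proof is correct: the paper states Fact~\ref{Cf25} without proof (``follows easily from the above definitions''), and your argument is exactly the routine verification being omitted, with the two key observations ($m_i\geq i$ for a strictly increasing sequence starting at $0$, and monotonicity of $(A_n)$) used in the right places, including the composition of witnesses $p_i=m_{n_i}$ in part (ii). The only cosmetic point is that, by Definition~\ref{iublock}, the witnessing sequences are attached to finite initial segments of $\vw$ and $\vt$, so the composition should formally be carried out segment by segment rather than with a single infinite witness sequence; this does not affect the substance of your argument.
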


\subsubsection{The notion of $k$-large families}
The following is an extension of Definition \ref{defnition__large}.

\begin{defn}\label{udefn large}
  Let $k \in \nn$, $E \subseteq W(A)$  and $\vs\in\V$.
  Then $E$ will be called
  $k$-\textit{large} in $\vs$ if $E \cap <\vw  \  \| \
  (A_{k+n})_{n=0}^ \infty>_c \neq \emptyset,$
  for every infinite extracted $k$-subsequence
  $\vw$ of $\vs$.\end{defn}

 We close this subsection with some properties of $k$-large
  families.

\begin{fact}\label{ublock fact}
  Let $k \in \nn$, $E \subseteq W(A)$ and $\vs=(s_n(x))_{n=0}^ \infty \in \V$
such that $E$ is $k$-large in $\vs$. Then the following hold true.
\begin{enumerate}

\item[(i)]  $E$ is $k$-large in $\vt$,  for every infinite
extracted $k$-subsequence $\vt$ of  $\vs$.

  \item[(ii)] For every $m \in \nn$,
$E$ is $(k+m)$-large in $(s_n(x)_{n=m}^\infty$.

\end{enumerate}
\end{fact}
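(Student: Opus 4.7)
Both parts of the fact reduce to careful bookkeeping with the partitions that witness ``$\leq_k$'', together with the monotonicity of the alphabets $(A_n)_{n=0}^\infty$. I shall handle each part by unwinding the definitions and invoking the facts already proved.

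For part (i), let $\vt$ be any infinite extracted $k$-subsequence of $\vs$ and pick an arbitrary infinite extracted $k$-subsequence $\vw$ of $\vt$. Applying Fact \ref{Cf25}(ii) with both levels taken equal to $k$ (so that the hypothesis $k\leq l$ is trivially met) yields $\vw \leq_k \vs$. Since $E$ is $k$-large in $\vs$, we obtain $E \cap <\vw \ \| \ (A_{k+n})_{n=0}^\infty>_c \neq \emptyset$, which is precisely what $k$-largeness of $E$ in $\vt$ requires. Part (i) is thus an immediate corollary of Fact \ref{Cf25}(ii).

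For part (ii), fix $m \in \nn$ and an arbitrary infinite extracted $(k+m)$-subsequence $\vw = (w_i(x))_{i=0}^\infty$ of $(s_n(x))_{n=m}^\infty$, witnessed by a partition $0=m_0<m_1<\ldots$ with $w_i(x) \in <(s_{m+n}(x))_{n=m_i}^{m_{i+1}-1} \ \| \ (A_{k+m+n})_{n=m_i}^{m_{i+1}-1}>_v$. My plan is to reinterpret $\vw$ as an extracted $k$-subsequence of $\vs$ itself by adjusting the partition. Concretely, I keep the same sequence of variable words but use the partition $M_0=0$, $M_i = m+m_i$ for $i \geq 1$. After the reindexing $p=m+n$, one sees that for every $i \geq 1$ the word $w_i(x)$ sits in $<(s_p(x))_{p=M_i}^{M_{i+1}-1} \ \| \ (A_{k+p})_{p=M_i}^{M_{i+1}-1}>_v$, and for $i=0$ the trivial inclusion $<(s_p(x))_{p=m}^{m+m_1-1} \ \| \ (A_{k+p})_{p=m}^{m+m_1-1}>_v \subseteq <(s_p(x))_{p=0}^{M_1-1} \ \| \ (A_{k+p})_{p=0}^{M_1-1}>_v$ settles the matter. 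Hence $\vw \leq_k \vs$.

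Having established $\vw \leq_k \vs$, the $k$-largeness of $E$ in $\vs$ produces $E \cap <\vw \ \| \ (A_{k+n})_{n=0}^\infty>_c \neq \emptyset$. To conclude, I would invoke the monotonicity $A_{k+l} \subseteq A_{k+m+l}$ of the given alphabet sequence, which immediately yields
\[
<\vw \ \| \ (A_{k+n})_{n=0}^\infty>_c \ \subseteq \ <\vw \ \| \ (A_{k+m+n})_{n=0}^\infty>_c,
\]
since the only distinction between the two spans lies in the pool of letters allowed at each position. Consequently $E \cap <\vw \ \| \ (A_{k+m+n})_{n=0}^\infty>_c \neq \emptyset$, establishing the $(k+m)$-largeness of $E$ in $(s_n(x))_{n=m}^\infty$. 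The only real ``obstacle'' here is notational: one must keep straight the position within an extracted sequence versus the absolute alphabet index, but once the partition $M_0=0$, $M_i=m+m_i$ is written down, everything is mechanical.
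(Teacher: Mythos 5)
Your proof is correct and takes essentially the same route as the paper: part (i) is the same one-line deduction from Fact \ref{Cf25} (you invoke the transitivity in its part (ii) with the two levels equal, which is indeed the apt citation), and in part (ii) your reindexed partition $M_0=0$, $M_i=m+m_i$ simply spells out the step the paper dismisses as ``easy to check'' ($\vt\leq_{k+m}(s_n(x))_{n=m}^\infty$ implies $\vt\leq_k\vs$) before applying the identical alphabet-monotonicity inclusion of constant spans.
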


\begin{proof}
  (i) It follows easily using the first part of Fact \ref{Cf25}.

  (ii) Let $\vt \in \V$ such that $\vt \leq_ {k+m} (s_n(x))_{n=m}^\infty$.
  It is easy to check that  $\vt\leq_k\vs$ and therefore,
  since $E$ is $k$-large in $\vs$, we obtain that
  $$E \cap <\vt  \  \| \ (A_{k+n})_{n=0}^ \infty>_c \neq
  \emptyset.$$ Moreover, since the
  sequence of alphabets $(A_n)_{n=0}^\infty$ is increasing we get that
  $$<\vt  \  \| \ (A_{k+n})_{n=0}^ \infty>_c
  \subseteq <\vt  \  \| \
  (A_{k+m+n})_{n=0}^ \infty>_c.$$
  Hence, $E\cap <\vt  \  \| \
  (A_{k+m+n})_{n=0}^ \infty>_c \neq \emptyset$ for every
  $\vt \leq _{k+m}
  (s_n(x))_{n=m}^\infty$, i.e. $E$ is $(k+m)$-large in
  $(s_n(x)_{n=m}^\infty$.
\end{proof}

\begin{fact}\label{Cl30}
   Let $k \in \nn$, $E \subseteq W(A)$ and
  $\vec{s} \in \V$
   such that $E$ is $k$-large in $\vs$. Let $r\geq 2$  and
   let $E=\bigcup_{i=1}^r
   E_i$. Then there exist $1\leq i \leq r$ and an
   infinite extracted $k$-subsequence $\vt$
   of $\vs$ such that $E_i$ is $k$-large in $\vt$.
\end{fact}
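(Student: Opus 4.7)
The plan is to prove Fact \ref{Cl30} by contradiction, mirroring the scheme used for Fact \ref{Cl14} but keeping careful track of the parameter $k$ and of the fact that refinement must preserve the relation $\leq_k$ all the way back to $\vs$. Assume the conclusion fails, so for each $1\leq i\leq r$ and for every infinite extracted $k$-subsequence $\vw$ of $\vs$, the set $E_i$ is not $k$-large in $\vw$. Unfolding Definition \ref{udefn large}, this means that for any such $\vw$ there exists $\vw'\leq_k\vw$ with $E_i \cap <\vw' \ \| \ (A_{k+n})_{n=0}^\infty>_c = \emptyset$.

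The main step is a finite inductive construction of a chain
$$\vs = \vs_0 \; \geq_k \; \vs_1 \; \geq_k \; \cdots \; \geq_k \; \vs_r,$$
all terms in $\V$, with the property that for every $1\leq i\leq r$,
$$E_i \cap <\vs_i \ \| \ (A_{k+n})_{n=0}^\infty>_c = \emptyset.$$
Given $\vs_{i-1}\leq_k \vs$, I would first invoke the contradiction hypothesis applied to $\vs_{i-1}$ (which is an infinite extracted $k$-subsequence of $\vs$) to get $\vs_i\leq_k \vs_{i-1}$ such that $E_i$ is disjoint from $<\vs_i \ \| \ (A_{k+n})_{n=0}^\infty>_c$. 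Then Fact \ref{Cf25}(ii), applied with the two parameters equal to $k$, yields $\vs_i\leq_k \vs$, keeping the induction well-posed. Moreover, Fact \ref{Cf25}(i) gives the nested inclusion $<\vs_i \ \| \ (A_{k+n})_{n=0}^\infty>_c \subseteq <\vs_{i-1} \ \| \ (A_{k+n})_{n=0}^\infty>_c$, so the previously secured disjointnesses $E_j\cap <\vs_{j}\|\cdots>_c=\emptyset$ for $j<i$ automatically upgrade to $E_j\cap <\vs_i \|\cdots>_c=\emptyset$.

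At the end of the induction, $\vs_r\leq_k\vs$ satisfies $E_i\cap <\vs_r \ \| \ (A_{k+n})_{n=0}^\infty>_c = \emptyset$ for every $1\leq i\leq r$, hence $E=\bigcup_{i=1}^r E_i$ is disjoint from $<\vs_r \ \| \ (A_{k+n})_{n=0}^\infty>_c$. Since $\vs_r$ is an infinite extracted $k$-subsequence of $\vs$, this directly contradicts the assumption that $E$ is $k$-large in $\vs$, finishing the proof.

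There is no real obstacle here; the only thing one must be careful about is not confusing the two layers of "extraction", namely remembering to apply Fact \ref{Cf25}(ii) at every step so that each $\vs_i$ remains an extracted $k$-subsequence of the original $\vs$ (rather than only of its immediate predecessor), which is what is required for the final contradiction with $k$-largeness in $\vs$.
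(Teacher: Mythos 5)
Your argument is correct and is exactly the standard color-by-color refinement that the paper has in mind when it states this fact (and its analogue, Fact \ref{Cl14}) as following ``by contradiction'' without written proof: negate, refine $\vs=\vs_0\geq_k\vs_1\geq_k\cdots\geq_k\vs_r$ killing one $E_i$ at a time, and use Fact \ref{Cf25} (with both parameters equal to $k$) to keep each $\vs_i\leq_k\vs$ and to nest the constant spans, contradicting $k$-largeness of $E$ in $\vs$. No gaps; your attention to transitivity of $\leq_k$ back to the original $\vs$ is precisely the point that needs checking.
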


\begin{defn}\label{definition reduced finite}
Let $m \in \nn$ and $(s_n(x))_{n=0}^m \in V^{< \infty} (A)$ and let
$B$ be a finite subset of $A$. We set  $$[(s_n(x))_{n=0}^m  \; \| \;
B ]_c= \big\{s_0(a_0)...s_m(a_m):
 a_0, ... , a_m\in B \big\}$$ and
$$[(s_n(x))_{n=0}^m \; \| \;B]_v=V(A) \cap \big\{s_0(a_0)...s_m(a_m):
 a_0, ... , a_m\in B \cup \{x\} \big\}.$$
\end{defn}

\begin{fact}\label{Cl31}
Let $k \in \nn$, $E \subseteq W(A)$ and $\vs=(s_n(x))_{n=0}^ \infty \in \V$
such that $E$ is $k$-large in $\vs$. Then there exist $m \in \nn$
and $w(x) \in <(s_n(x))_{n=0}^m \  \| \ (A_{k+n})_{n=0}^m>_v$ such
that $\{w(a): a\in A_k\}\subseteq  E$.
\end{fact}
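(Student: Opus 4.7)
The plan is to mimic the proof of Fact~\ref{uncCl31}, adjusting the bookkeeping to the increasing sequence of alphabets. I would argue by contradiction: assume that for every $m\in\nn$ and every $w(x)\in\,<(s_n(x))_{n=0}^m \ \| \ (A_{k+n})_{n=0}^m>_v$ one has $\{w(a):a\in A_k\}\nsubseteq E$, and then build inductively an infinite extracted $k$-subsequence $\vw=(w_n(x))_{n=0}^\infty\leq_k\vs$ satisfying $<\vw \ \| \ (A_{k+n})_{n=0}^\infty>_c \subseteq E^c$; this contradicts the $k$-largeness of $E$ in $\vs$.

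For the inductive step, suppose $(w_i(x))_{i=0}^{n-1}\leq_k\vs$ has been chosen with $<(w_i(x))_{i=0}^{n-1} \ \| \ (A_{k+i})_{i=0}^{n-1}>_c\subseteq E^c$. Let $n_0$ be the least integer with $w_0(x)\cdots w_{n-1}(x)\in\,<(s_i(x))_{i=0}^{n_0-1} \ \| \ (A_{k+i})_{i=0}^{n_0-1}>_v$ (so $n_0\geq n$). Let $M$ denote the cardinality of the finite set $<(w_i(x))_{i=0}^{n-1} \ \| \ (A_{k+i})_{i=0}^{n-1}>_c \cup\{\emptyset\}$ and set $N=HJ(|A_{k+n}|,2^M)$. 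To each $z\in[(s_{n_0+i}(x))_{i=0}^{N-1}\ \| \ A_{k+n}]_c$ I associate the tuple whose coordinates, indexed by the prefixes $u$ in the above set, record whether $uz\in E$. Applying the Hales--Jewett theorem with alphabet $A_{k+n}$, this $2^M$-coloring admits a variable word $w(x)\in[(s_{n_0+i}(x))_{i=0}^{N-1}\ \| \ A_{k+n}]_v$ for which, for every prefix $u$, the set $\{uw(b):b\in A_{k+n}\}$ is either entirely contained in $E$ or disjoint from $E$.

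The decisive step is to rule out the first alternative for every $u$, so that $w_n(x):=w(x)$ legitimately extends the construction. Since $n\leq n_0$ and the alphabets are increasing, $A_{k+n}\subseteq A_{k+n_0+i}$ for each $i\geq 0$, so $w(x)$ in fact lies in $<(s_i(x))_{i=n_0}^{n_0+N-1}\ \| \ (A_{k+i})_{i=n_0}^{n_0+N-1}>_v$; a similar check places each prefix $u$ in $<(s_i(x))_{i=0}^{n_0-1}\ \| \ (A_{k+i})_{i=0}^{n_0-1}>_c$, whence $uw(x)\in\,<(s_i(x))_{i=0}^{n_0+N-1}\ \| \ (A_{k+i})_{i=0}^{n_0+N-1}>_v$. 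If $\{uw(b):b\in A_{k+n}\}\subseteq E$ held for some $u$, then $\{uw(a):a\in A_k\}\subseteq E$ (using $A_k\subseteq A_{k+n}$), contradicting the standing assumption. Hence the second alternative holds for every $u$, which together with the inductive hypothesis yields $<(w_i(x))_{i=0}^n\ \| \ (A_{k+i})_{i=0}^n>_c\subseteq E^c$ and closes the induction. The principal obstacle is precisely this span-tracking check: monotonicity of $(A_n)$ makes it routine, but it is essential to invoke Hales--Jewett with the full alphabet $A_{k+n}$ rather than $A_k$, so that monochromaticity over $A_{k+n}$ can be restricted to $A_k$ to produce the contradiction.
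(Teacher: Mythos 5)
Your proposal is correct and follows the paper's proof of Fact~\ref{Cl31} essentially line by line: the same contradiction hypothesis, the same inductive construction of $\vw\leq_k\vs$ landing in $E^c$, the same choice of $n_0$, the same prefix-coloring of $[(s_{n_0+i}(x))_{i=0}^{N-1}\,\|\,A_{k+n}]_c$, and the same invocation of Hales--Jewett over the alphabet $A_{k+n}$. The only cosmetic difference is that you set $N=HJ(|A_{k+n}|,2^M)$ with $M$ the exact cardinality of the prefix set, whereas the paper uses the upper bound $\prod_{i=0}^{n-1}(|A_{k+i}|+1)$ in the exponent; both work since $HJ$ is monotone in its second argument. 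You also make explicit the one step the paper leaves implicit, namely that monochromaticity of $\{uw(b):b\in A_{k+n}\}$ passes down to $\{uw(a):a\in A_k\}$ via $A_k\subseteq A_{k+n}$, which is exactly what ties the HJ conclusion back to the standing assumption.
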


\begin{proof}
Assume to the contrary that the conclusion fails. By induction we
construct
  a sequence $\vw = (w_n(x))_{n=0}^\infty  \leq _k \vs$ such
   that $<\vw \; \| \; (A_{k+n})_{n=0}^ \infty>_c \subseteq E^c$
   which is a contradiction
   since $E$ is $k$-large in
   $\vs$.
The general inductive step of the construction is as follows.
   Let $n \geq 1$ and assume that
   $ (w_i(x))_{i=0}^{n-1} \leq _k \vs$  and
   $<(w_i(x))_{i=0}^{n-1} \; \| \;
   (A_{k+i})_{i=0}^{n-1}>_c \subseteq
   E^c.$
Let $n_0 \geq 1$ be the least integer  satisfying $$w_0(x)  ...
w_{n-1}(x) \in
   <(s_i(x))_{i=0}^{n_0-1}\;\|\; (A_{k+i})_{i=0}^{n_0-1}>_v$$
   and let  $$N= HJ(|A_{k+n}|,2^{ \prod_{i=0}^{n-1}(|A_{k+i}|+1)}).$$
  To each  $w \in   [(s_{n_0+i}(x))_{i=0}^{N-1} \  \|
\ A_{k+n}]_c$
   we assign the set of words
    $$\big\{uw :
   u\in   <(w_i(x))_{i=0}^{n-1} \; \| \;
   (A_{k+i})_{i=0}^{n-1}>_c\cup \{\emptyset \} \big\} . $$
Since $ uw \in E$ or $E^c$,
  it is easy to see that the above correspondence induces  a $2^{ \prod_{i=0}^{n-1}(|A_{k+i}|+1)}$-coloring
  of
    the set $ [(s_{n_0+i}(x))_{i=0}^{N-1} \  \| \ A_{k+n}]_c.$ Hence,
   by the Hales--Jewett theorem and the choice of $N$, there exists
   a variable word $$w(x) \in [(s_{n_0+i}(x))_{i=0}^{N-1} \  \| \
   A_{k+n}]_v$$
   such that for each
   $u \in
 <(w_i(x))_{i=0}^{n-1} \; \| \;
   (A_{k+i})_{i=0}^{n-1}>_c\cup \{\emptyset \} $  the set
   $\{ u w(a): a \in
A_{k+n}\}$  either is included in $E$ or is disjoint from $E$. By
our initial assumption, there is no $u \in  <(w_i(x))_{i=0}^{n-1} \;
\| \;
   (A_{k+i})_{i=0}^{n-1}>_c\cup \{\emptyset \}$
    satisfying the first alternative. Setting
 $w_n(x)=w(x)$ we easily see that
    $(w_i(x))_{i=0}^{n} \leq _k \vs$
   and $<(w_i(x))_{i=0}^{n} \; \| \; (A_{k+i})_{i=0}^{n}>_c \subseteq
   E^c.$ The inductive step of the construction of $\vw$  is
   complete and as we have already mentioned in the beginning of
   the proof this leads to a contradiction.
\end{proof}

\subsection{The main arguments}

The next lemma corresponds to Lemma \ref{Cl16} and constitutes
the core of the proof of Theorem \ref{thm3}.

Recall that for every non empty  subsets $E$, $ F$  of $W(A)$ we
have  set
  \[E_F=\{z\in W(A):w z \in E \text{ for every }w\in
  F\}.\]
\begin{lem}\label{Cl32}
   Let $k \in \nn$, $E \subseteq W(A)$ and $\vs=(s_n(x))_{n=0}^ \infty \in \V$
    such that $E$ is $k$-large in $\vs$. Then there exist
    $ m\geq1$, a variable word
    $$w(x)\in <(s_n(x))_{n=0}^{m-1}\ \| \   (A_{k+n})_{n=0}^{m-1}>_v$$
   and $\vt \in \V$ with $\vt \leq _{k+m} (s_{n}(x))_{n=m}^\infty$ such that
    setting $F=\{w(a):a \in A_{k}\}$ then
   $E\cap E_F$ is $(k+m)$-large in $\vt$.
\end{lem}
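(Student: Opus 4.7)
The plan is to mirror the proof of Lemma \ref{Cl16}, paying careful attention to two new sources of complication: the alphabets $A_{k+n}$ are now indexed by the original position $n$ of each $s_n$, and the conclusion upgrades from $k$-largeness to $(k+m)$-largeness, so the monotonicity of $(A_n)_{n=0}^\infty$ must be exploited to bridge the two.

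First I would prove the analogue of Claim~1 in Lemma \ref{Cl16}: there exists $n_0 \in \nn$ such that for every $z \in <(s_i(x))_{i=n_0+1}^{\infty} \; \| \; (A_{k+i})_{i=n_0+1}^{\infty}>_c$ there is a variable word $w(x) \in <(s_i(x))_{i=0}^{n_0} \; \| \; (A_{k+i})_{i=0}^{n_0}>_v$ satisfying $\{w(a)z : a \in A_k\} \subseteq E$. Arguing by contradiction, I would construct a strictly increasing sequence $(k_n)_{n=0}^\infty$ with $k_0 = 0$ and constant words $z_n \in <(s_i(x))_{i=k_n+1}^{k_{n+1}-1} \; \| \; (A_{k+i})_{i=k_n+1}^{k_{n+1}-1}>_c$ such that $\{u(a) z_n : a \in A_k\} \nsubseteq E$ for every $u(x) \in <(s_i(x))_{i=0}^{k_n} \; \| \; (A_{k+i})_{i=0}^{k_n}>_v$. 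Setting $v_n(x) = s_{k_n}(x) z_n$, the sequence $\vec{v} = (v_n(x))_{n=0}^\infty$ is an extracted $k$-subsequence of $\vs$; every variable word $w(x) \in <\vec{v} \; \| \; (A_{k+n})_{n=0}^\infty>_v$ terminates in some $z_{l_p}$ and hence can be written as $u(x) z_{l_p}$ with $u(x) \in <(s_i(x))_{i=0}^{k_{l_p}} \; \| \; (A_{k+i})_{i=0}^{k_{l_p}}>_v$, the inclusions $A_{k+l_i} \subseteq A_{k+k_{l_i}}$ (by monotonicity) ensuring that the substituted letters remain admissible. The construction then forbids $\{w(a) : a \in A_k\} \subseteq E$ for any such $w(x)$, contradicting Fact \ref{Cl31} applied to $\vec{v}$, which is $k$-large for $E$ by Fact \ref{ublock fact}(i).

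Setting $m = n_0 + 1$, $L = <(s_n(x))_{n=0}^{m-1} \; \| \; (A_{k+n})_{n=0}^{m-1}>_v$, and $F(w(x)) = \{w(a) : a \in A_k\}$ for $w(x) \in L$, the claim rewrites as
\[ E \cap <(s_i(x))_{i=m}^{\infty} \; \| \; (A_{k+i})_{i=m}^{\infty}>_c \subseteq \bigcup_{w(x) \in L} E \cap E_{F(w(x))}. \]
I would then show that the right-hand side is $(k+m)$-large in $(s_n(x))_{n=m}^\infty$. The starting point is Fact \ref{ublock fact}(ii), which gives that $E$ is $(k+m)$-large in $(s_n(x))_{n=m}^\infty$. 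The step I expect to be the main technical obstacle is the inclusion
\[ <\vw \; \| \; (A_{k+m+n})_{n=0}^\infty>_c \; \subseteq \; <(s_i(x))_{i=m}^\infty \; \| \; (A_{k+i})_{i=m}^\infty>_c \]
for every $\vw \leq_{k+m} (s_n(x))_{n=m}^\infty$. This is a bookkeeping exercise: each element of the left-hand span substitutes letters from $A_{k+m+l_j}$ at positions $s_n$ with $n \geq m + l_j$, and since $A_{k+m+l_j} \subseteq A_{k+n}$ by monotonicity, the same element lies in the right-hand span.

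Combining the two inclusions gives $\bigcup_{w(x) \in L} E \cap E_{F(w(x))} \cap <\vw \; \| \; (A_{k+m+n})_{n=0}^\infty>_c \neq \emptyset$ for every $\vw \leq_{k+m} (s_n(x))_{n=m}^\infty$, which is precisely $(k+m)$-largeness of the finite union in $(s_n(x))_{n=m}^\infty$. Applying Fact \ref{Cl30} then produces some $w(x) \in L$ and $\vt \leq_{k+m} (s_n(x))_{n=m}^\infty$ such that $E \cap E_{F(w(x))}$ is $(k+m)$-large in $\vt$, yielding the conclusion of the lemma with $F = F(w(x))$.
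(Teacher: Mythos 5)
Your proposal is correct and follows essentially the same route as the paper's proof: the same Claim~1 with the words $v_n(x)=s_{k_n}(x)z_n$ contradicting Fact~\ref{Cl31}, the same choice $m=n_0+1$ with the covering by the sets $E\cap E_{F(w(x))}$, the upgrade to $(k+m)$-largeness via Fact~\ref{ublock fact}(ii), and the final appeal to Fact~\ref{Cl30}. The only cosmetic difference is that you verify the span inclusion for $\vw\leq_{k+m}(s_n(x))_{n=m}^\infty$ by hand, where the paper implicitly invokes Fact~\ref{Cf25}(i).
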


\begin{proof}

  We start with the following claim.
\medskip

 \noindent\textit{Claim 1.}
    There exists $n_0 \in \nn$ such that for every
    $$z\in  <(s_i(x))_{i=n_0+1}^{\infty} \  \| \
    (A_{k+i})_{i=n_0+1}^\infty>_c$$
       there exists $w(x)$ in $<(s_i(x))_{i=0}^{n_0}  \  \| \ (A_{k+i})_{i=0}^{n_0}>_v$
       such that $$\{w(a) z: a\in A_k \} \subseteq E.$$

 \begin{proof}[Proof of Claim 1]  Assume that the claim is not true.
 Then for every $n\in \nn$ there exists
 $z\in  <(s_i(x))_{i=n+1}^{\infty} \  \| \ (A_{k+i})_{i=n+1}^\infty>_c$
 such that
the set
 $\{w(a) z: a\in A_k \}$
  is not contained in  $E$,  for every $w(x)\in <(s_i(x))_{i=0}^{n_0}  \  \| \ (A_{k+i})_{i=0}^{n_0}>_v$. Using this assumption  we easily  find  a strictly increasing
  sequence $(k_n)_{n=0}^\infty$ in $\nn$ with $k_0=0$ and a sequence $(z_n)_{n=0}^\infty$
   in $W(A)$ such the following are satisfied.
  \begin{enumerate}
  \item[(i)] For every $n\in\nn$, we have
  $z_n \in < (s_i(x))_{i=k_n+1}^{k_{n+1}-1}  \  \| \ (A_{k+i})_{i=k_n+1}^{k_{n+1}-1}>_c$.

  \item[(ii)] For every $n\in\nn$ and every
  $u(x)\in<(s_i(x))_{i=k_0}^{k_{n}}  \  \| \ (A_{k+i})_{i=k_0}^{k_{n}}>_v$
   we have that $\{u(a)z_n:a\in A_k\} \nsubseteq E.$
  \end{enumerate}
We set $v_n(x)=s_{k_n}(x) z_n$, for every $n \in \nn$.

  By (i), we have that $(s_{k_0}(x)z_0,...,s_{k_n}(x)z_n)\leq_k \vs$,
   for every $n\in\nn$ and therefore,   $(v_n(x))_{n=0}^\infty \leq_k \vs $.
Moreover, since the sequence $(k_n)_{n=0}^\infty$ is
strictly increasing and the sequence of finite alphabets
 $(A_n)_{n=0}^\infty$ is increasing,
 by (ii), we obtain that
 $\{u(a)z_n:a\in A_k\} \nsubseteq E,$
  for every $n\in\nn$ and every
 $$u(x)\in<(s_{k_0}(x)z_0,  ..., s_{k_{n-1}}(x)z_{n-1}, s_{k_n}(x)
  \  \| \ (A_{k+i})_{i=0}^n>_v.$$
Hence, since  every
  $w(x) \in<(v_n(x))_{n=0}^\infty \  \| \ (A_{k+n})_{n=0}^\infty>_v$
is of the form $w(x)=u(x)z_n$ for some unique $n\in\nn$ and  some
variable word $$u(x)<(s_{k_0}(x)z_0,  ...,
s_{k_{n-1}}(x)z_{n-1}, s_{k_n}(x) \  \| \ (A_{k+i})_{i=0}^n>_v,$$ we
conclude  that there is no $w(x)  \in <(v_n(x))_{n=0}^\infty \ \| \
(A_{k+n})_{n=0}^\infty>_v$ such that $\{w(a):a \in A_k\} \subseteq
E.$ But since
   $(v_n(x))_{n=0}^\infty  \leq_k  \vs $, we have that  $E$ is $k$-large in
   $(v_n(x))_{n=0}^\infty $ and so by  Fact \ref{Cl31} we arrive to a contradiction.
\end{proof}

  We set $m=n_0+1$.  Also, let $L=<(s_n(x))_{n=0}^{m-1} \  \| \ (A_{k+i})_{i=0}^{m-1}>_v $
  and  for
every $ w(x) \in L$, let $F(w(x)) =\{ w(a): a \in A_k\}.$ By Claim
1, we have that $<(s_i(x))_{i=m}^{\infty} \  \| \
(A_{k+i})_{i=m}^\infty >_c\subseteq \bigcup_{w(x)\in L}E_{F(w(x))}$
and therefore, $$E\cap <(s_i(x))_{i=m}^{\infty}\  \| \
(A_{k+i})_{i=m}^\infty>_c   \subseteq \bigcup_{w(x)\in L}E\cap E_{F(w(x))}.$$
By part (ii) of Fact \ref{ublock fact}, we have that $E$ is
$(k+m)$-large in $(s_i(x))_{i=m}^{\infty}$. Hence, $\bigcup_{w(x)\in
L}E\cap E_{F(w(x))}$ is $(k+m)$-large in $(s_i(x))_{i=m}^{\infty}$.
So, by Fact \ref{Cl30} there exist a variable word $w(x)\in L$ and
$\vt \leq_{k+m} (s_n(x))_{n=m}^\infty$ such that
 $E\cap E_{F(w(x))}$ is $(k+m)$-large in $\vt$ and the proof is complete.
  \end{proof}

\begin{lem}\label{unccl18}  Let $k \in \nn$, $E \subseteq W(A)$ and
 $\vs= (s_n(x))_{n=0}^ \infty \in \V$ such that
   $E$ is $k$-large in $\vs$. Then there exist a sequence
   $\vw =(w_n(x))_{n=0}^ \infty \leq _k \vs $ and two
strictly increasing sequences $(k_n)_{n=0}^ \infty$ and
$(p_n)_{n=0}^ \infty$ in $ \nn $, with $k_0=k$ and $p_0=0$, such
that for every $n \in \nn$, the following  are satisfied.
\begin{enumerate}
 \item[(W1)] $k+p_n \geq k_n$

 \item[(W2)]  $w_n(x) \in
<(s_i(x))_{i=p_n}^{p_{n+1}-1} \; \| \;
(A_{k+i})_{i=p_n}^{p_{n+1}-1}>_v$. \item[(W3)] Setting  $F_n=<
(w_i(x))_{i=0}^n\;\|\; (A_{k_i})_{i=0}^{n}>_c$ then   $E\cap
E_{F_n}$ is $k_{n+1}$-large in $(w_{i}(x))_{i=n+1}^\infty$.
\end{enumerate}
\end{lem}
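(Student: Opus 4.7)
The plan is to iterate Lemma \ref{Cl32} in the spirit of the derivation of Lemma \ref{Cl20} from Lemma \ref{Cl16}. I will construct simultaneously the variable words $(w_n(x))_{n=0}^\infty$, the index sequences $(k_n)_{n=0}^\infty$ and $(p_n)_{n=0}^\infty$, and auxiliary infinite sequences $\vs_n \in \V$, starting with $\vs_0 = \vs$, $k_0 = k$, $p_0 = 0$, and the convention $F_{-1} = \emptyset$ (so that $E \cap E_{F_{-1}} = E$ is $k_0$-large in $\vs_0$ by hypothesis).

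At stage $n$, the inductive hypotheses are that $\vs_n \leq_{k_n} (s_i(x))_{i=p_n}^\infty$, that $E \cap E_{F_{n-1}}$ is $k_n$-large in $\vs_n$, and that $k + p_n \geq k_n$. Applying Lemma \ref{Cl32} with level $k_n$, sequence $\vs_n$, and set $E \cap E_{F_{n-1}}$, I obtain an integer $m_n \geq 1$, a variable word $w_n(x) \in <(s^{(n)}_i(x))_{i=0}^{m_n - 1} \ \| \ (A_{k_n + i})_{i=0}^{m_n - 1}>_v$, and $\vs_{n+1}$ with $\vs_{n+1} \leq_{k_n + m_n} (s^{(n)}_i(x))_{i=m_n}^\infty$ on which $(E \cap E_{F_{n-1}}) \cap (E \cap E_{F_{n-1}})_{F(w_n)}$ is $(k_n + m_n)$-large, where $F(w_n) = \{w_n(a) : a \in A_{k_n}\}$. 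Using the identity $(E \cap E_{F_{n-1}}) \cap (E \cap E_{F_{n-1}})_{F(w_n)} = E \cap E_{F_n}$ (exactly as in the proof of Lemma \ref{Cl20}), I set $k_{n+1} := k_n + m_n$ and $p_{n+1} := p_n + M_n$, where $M_n$ is the total number of entries of $(s_i)_{i=p_n}^\infty$ consumed in building $s^{(n)}_0, \ldots, s^{(n)}_{m_n - 1}$. Since $m_n, M_n \geq 1$, both $(k_n)$ and $(p_n)$ are strictly increasing.

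Condition (W1) is then immediate: $k + p_{n+1} = k + p_n + M_n \geq k_n + m_n = k_{n+1}$, using the inductive hypothesis and $M_n \geq m_n$. For condition (W3), one observes that $(w_i(x))_{i=n+1}^\infty$ is an extracted $k_{n+1}$-subsequence of $\vs_{n+1}$ by construction, so Fact \ref{ublock fact}(i) transfers the $k_{n+1}$-largeness of $E \cap E_{F_n}$ from $\vs_{n+1}$ to $(w_i(x))_{i=n+1}^\infty$.

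The only genuine technical point is (W2), namely the recognition of $w_n(x)$ as an element of $<(s_i(x))_{i=p_n}^{p_{n+1}-1} \ \| \ (A_{k+i})_{i=p_n}^{p_{n+1}-1}>_v$. By Fact \ref{Cf25}(i) the underlying constant-word positions come from the correct $s_i$'s; the subtlety lies in the alphabets of the letters substituted in. Unpacking the definition of extracted $k_n$-subsequence, any letter placed by $w_n$ at the position corresponding to $s_{p_n + e}$ lies in $A_{k_n + l}$ for some $l \leq e$ (since the block-boundary sequence of an extracted $k_n$-subsequence is strictly increasing from $0$), and the inductive hypothesis $k_n \leq k + p_n$ gives $A_{k_n + l} \subseteq A_{k_n + e} \subseteq A_{k + (p_n + e)}$. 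Hence $w_n(x)$ lies in the prescribed span. This alphabet-tracking, which has no analogue in the scalar case of Lemma \ref{Cl20}, is the sole obstacle beyond the argument already used there.
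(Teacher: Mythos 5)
Your overall strategy tracks the paper's exactly: iterate Lemma \ref{Cl32} to build $(w_n), (\vs_n), (k_n)$, keep track of how many positions of $\vs$ are consumed at each stage to define $(p_n)$, and use the monotonicity of the alphabets to absorb the level shifts. The alphabet-tracking you identify for (W2) is indeed the key technical point, and your bound $k_n+l\le k_n+e\le k+p_n+e$ is the right calculation. That said, there are two places where what you write is not quite what can actually be established.

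First, the stated inductive invariant $\vs_n\le_{k_n}(s_i(x))_{i=p_n}^\infty$ cannot be maintained. If $M_n>m_n$ (that is, if building $s^{(n)}_0,\dots,s^{(n)}_{m_n-1}$ genuinely merges several $s_i$'s, which is the generic case), then the inherited level at stage $n+1$ is $k_n+M_n$, which is strictly larger than $k_{n+1}=k_n+m_n$; since a smaller level is a \emph{stronger} assertion, $\vs_{n+1}\le_{k_{n+1}}(s_i)_{i=p_{n+1}}^\infty$ will fail. The invariant that does propagate, and is what the paper records as condition (vii) of its Step~2, is $\vs_n\le_{k+p_n}(s_i(x))_{i=p_n}^\infty$; this is stable under the step (via Fact~\ref{Cf25}(ii) together with $k_{n+1}\le k+p_{n+1}$). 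Your argument for (W2) is unaffected by this change, because the only thing it actually uses is $k_n\le k+p_n$, i.e.\ (W1), so the fix is purely to correct the level in the hypothesis.

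Second, and more substantively, the claim that ``$(w_i(x))_{i=n+1}^\infty$ is an extracted $k_{n+1}$-subsequence of $\vs_{n+1}$ by construction'' is not literal. What the construction gives you directly is $w_j(x)\in<(s^{(j)}_i(x))_{i=0}^{m_j-1}\;\|\;(A_{k_j+i})_{i=0}^{m_j-1}>_v$ and $\vs_{j+1}\le_{k_{j+1}}(s^{(j)}_i(x))_{i=m_j}^\infty$; turning this chain into a single relation $(w_i)_{i\ge n+1}\le_{k_{n+1}}\vs_{n+1}$ requires the same kind of bookkeeping you do for (W2), only relativized to base sequence $\vs_{n+1}$ and base level $k_{n+1}$. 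Concretely, one needs a sequence $(q_n)$ with $q_0=0$, $k_{n+1}+q_n\ge k_{n+1+n}$, and the corresponding inclusion of variable spans, proved by the same induction that produced $(p_n)$. This is exactly the paper's Step~3, and it is not a formality that can be waved past: the inequality $k_{n+1}+q_j\ge k_{n+1+j}$ plays the same role that $k+p_n\ge k_n$ plays for (W2), and you need it to conclude that the letters fall in the right alphabets. Once you carry out this relative construction, your invocation of Fact~\ref{ublock fact}(i) is then correct and finishes (W3). So your approach is right, but (W3) as written has a genuine gap that needs the relative version of your own (W2) argument to be spelled out.
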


\begin{proof} We start with the following.
\medskip

 \noindent\textit{Step 1.}  Let $k \in \nn$, $E \subseteq W(A)$ and
 $\vs= (s_n(x))_{n=0}^ \infty \in \V$ such that
   $E$ is $k$-large in $\vs$. Then there exist (a) a sequence $(w_n(x))_{n=0}^ \infty$ of variable
words, (b)  a sequence $(\vs_n)_{n=0}^ \infty$ in  $ \V$ with $\vs_0
  =\vs$,  (c) two sequences $(m_n)_{n=0}^ \infty$ and $(k_n)_{n=0}^\infty$ in
$ \nn $, with  $m_0=0$ and $k_0=k$
 such that setting for every $n \in \nn$,
$\vs _n = (s_i ^{(n)}(x))_{i=0}^\infty $ then the following are
satisfied.
\begin{enumerate}
\item[(i)] $m_{n+1}\geq 1$ and
$k_{n+1}=k_{n}+m_{n+1}$.

\item[(ii)]
 $w_{n}(x) \in <(s^{(n)}_i(x))_{i=0}^{m_{n+1}-1}\ \| \
(A_{k_{n}+i})_{i=0}^{m_{n+1}-1}>_v$.

\item[(iii)]  $\vs _{n+1}$ is an extracted
$k_{n+1}$-subsequence of $(s_{i}^{(n)}(x))_{i=m_{n+1}}^\infty$.

\item[(iv)] If we set $ F_n=<
(w_i(x))_{i=0}^n\;\|\; (A_{k_i})_{i=0}^{n}>_c$ then $E\cap
 E_{F_{n}}$ is  $k_{n+1}$-large in  $\vs_{n+1}$.
\end{enumerate}

\begin{proof}[Proof of Step 1] For $n=0$ we set $m_0=0$, $k_0=0$ and $\vs_0=\vs$.
Assume that the construction has been carried out up to some
$n\in\nn$, i.e. the sequences $(w_i(x))_{i<n},$ $(\vs_i)_{i=0}^{n}$,
$(m_i)_{i=0}^{n}$, $(k_i)_{i=0}^{n}$  have been selected. We set
$G=E\cap E_{F_{n-1}}$ (if $n=0$, we set $G=E$). By our inductive
assumptions we have that $G$ is $k_n$-large in $\vs_n$. Therefore,
by Lemma \ref{Cl32}, there exist $ m\geq 1$, a variable word $w(x)
\in <(s^{(n)}_i(x))_{i=0}^{m-1}\ \| \ (A_{k_n+i})_{i=0}^{m-1}>_v$
and an extracted $({k_n+m})$-subsequence $\vt$ of
$(s^{(n)}_i(x))_{i=m}^\infty$, such that   $G\cap G_F$ is
$(k_n+m)$-large in $\vt$, where $F=\{w(a):a \in A_{k_n}\}$. We set
$$m_{n+1} =m, \ k_{n+1} = k_n +m, \ w_n(x) =w(x) \text{ and  }\ \vs_{n+1}=
\vt.$$ Moreover, if $F_n=< (w_i(x))_{i=0}^n\;\|\;
 (A_{k_i})_{i=0}^{n}>_c$ then it is easy to check that  $G\cap G_F=E\cap E_{F_{n}}.$
The above choices clearly fulfill  conditions (i)-(iv) and the proof
of the inductive step of the construction is complete.
\end{proof}

\noindent\textit{Step 2}. Let $(\vs_n)_{n=0}^ \infty$, $(m_n)_{n=0}^
\infty$ and $(k_n)_{n=0}^\infty$ be the sequences obtained in Step
1. Then there exists a strictly increasing sequence
$(p_n)_{n=0}^\infty$ in $\nn$ with $p_0=0$  such that for every $n \in \nn$,
  the following are satisfied.
\begin{enumerate}
\item [(v)] $k+p_{n}\geq k_{n}$. \item [(vi)] The set
$<(s^{(n)}_i(x))_{i=0}^{m_{n+1}-1}\ \| \
(A_{k_{n}+i})_{i=0}^{m_{n+1}-1}>_v$ is a subset of\\
$<(s_i(x))_{i=p_{n}}^{p_{n+1}-1} \; \| \;
(A_{k+i})_{i=p_{n}}^{p_{n+1}-1}>_v$. \item [(vii)] $\vs _{n}$ is
an extracted $(k+p_{n})$-subsequence of
$(s_{i}(x))_{i=p_{n}}^\infty$.
\end{enumerate}
\begin{proof}[Proof of Step 2] We set $p_0=0$ and we easily see that
(v) and (vii) are satisfied for $n=0$.
 Let $n\in\nn$ and assume that the
sequence $(p_i)_{i=0}^{n}$ has been selected.   By (vii) we obtain
that for every $m\geq 1$
 there exists a sequence $(I_j)_{j=0}^{m-1}$ of successive nonempty
intervals of $\nn$ with $\min (I_0)=0$ such that setting
$M(m)=\max(I_{m-1})+1$, then
\begin{equation}\label{eq1}s^{(n)}_j(x)\in
<(s_{p_n+i}(x))_{i\in I_j} \ \|\ (A_{k+p_n+i})_{i\in I_j}>_v\end{equation}
for every $j\in\{0,\dots, m-1\}$ and,
\begin{equation}\label{eq2} (s^{(n)}_i(x))_{i=m}^
 \infty \leq _{k+p_n+M(m)} (s_i(x))_{i=p_n+M(m)}^ \infty.
 \end{equation}
We claim that we may set $$p_{n+1}=p_n+M(m_{n+1}).$$ Indeed, by  our
inductive assumptions we have that $k+p_n\geq k_n$ and therefore,
since the sequence of the alphabets $(A_n)_{n=0}^\infty$
 is increasing, by
\eqref{eq1} (for $m=m_{n+1}$), we conclude that
$$<(s^{(n)}_i(x))_{i=0}^{m_{n+1}-1}\ \| \
(A_{k_{n}+i})_{i=0}^{m_{n+1}-1}>_v\subseteq
<(s_i(x))_{i=p_n}^{p_{n+1}-1} \; \| \;
(A_{k+i})_{i=p_n}^{p_{n+1}-1}>_v$$ and so (vi) is satisfied.
Moreover,  notice that $M(m)\geq m$. Hence,
\begin{equation}k+p_{n+1}=k+p_n+M(m_{n+1})\geq   k+p_n+m_{n+1}\geq k_n+ m_{n+1}=k_{n+1},
\end{equation} that is
(v) is also satisfied.  Finally, by (iii) of Step 1 and \eqref{eq2}
above, we have
$$\vs _{n+1} \leq _{k_{n+1}}
(s_{i}^{(n)}(x))_{i=m_{n+1}}^\infty\leq_{k+p_{n+1}}
(s_i(x))_{i=p_{n+1}}^ \infty.$$ Since $k_{n+1}\leq k+p_{n+1}$, by
part (ii) of Fact \ref{Cf25}, we obtain that $$\vs _{n+1}
\leq_{k+p_{n+1}} (s_i(x))_{i=p_{n+1}}^\infty.$$ Hence, (vii) is also
valid and the inductive step of the construction is complete.
\end{proof}
If  $n_0\geq 1$  then  starting from $\vs_{n_0}$ and $k_{n_0}$
instead of $\vs_0=\vs$ and $k_0=k$ and working as in Step 2  we
derive the following.
\medskip

\noindent\textit{Step 3}. Let   $n_0\geq 1$ and let $(\vs_n)_{n=0}^
\infty$, $(m_n)_{n=0}^ \infty$ and $(k_n)_{n=0}^\infty$ be the
sequences obtained in Step 1. Then there exists a strictly
increasing sequence $(q_n)_{n=0}^\infty$ in $\nn$ with $q_0=0$ such
that
 for every $n\in\nn$
the following are satisfied.
\begin{enumerate}
\item [(v$'$)] $k_{n_0}+q_{n}\geq k_{n_0+n}$. \item [(vi$'$)] The
set $<(s^{(n_0+n)}_i(x))_{i=0}^{m_{n_0+n+1}-1}\ \| \
(A_{k_{n_0+n}+i})_{i=0}^{m_{n_0+n+1}-1}>_v$ is a subset of the
set $ <(s^{(n_0)}_i(x))_{i=q_{n}}^{q_{n+1}-1} \; \| \;
(A_{k_{n_0}+i})_{i=q_{n}}^{q_{n+1}-1}>_v$.

\item [(vii$'$)] $\vs _{n_0+n}$ is an extracted
$(k_{n_0}+q_{n})$-subsequence of
$(s^{(n_0)}_{i}(x))_{i=q_{n}}^\infty$.
\end{enumerate}
\medskip

We are now ready to complete the proof of the lemma. Clearly, condition (W1)
 follows by
(v). Also, by (ii) and (vi) we obtain that
 $$w_{n}(x) \in <(s_i(x))_{i=p_{n}}^{p_{n+1}-1} \; \| \;
(A_{k+i})_{i=p_{n}}^{p_{n+1}-1}>_v,$$ for every $n\in \nn$ and so
(W2) is also satisfied. It remains to verify (W3). To this end, let
$n_0$ be an arbitrary positive integer.  Then, by (ii) and (vi$'$)
we get that
 $$w_{n_0 +n}(x) \in
 <(s^{(n_0)}_i(x))_{i=q_{n}}^{q_{n+1}-1} \; \| \;
(A_{k_{n_0}+i})_{i=q_{n}}^{q_{n+1}-1}>_v,$$ for every $n \in\nn$,
that is $(w_{i}(x))_{i=n_0}^\infty$ is an extracted
 $k_{n_0}$-subsequence
of $\vs_{n_0}$. Therefore, for every $n\in\nn$,
$(w_{i}(x))_{i=n+1}^\infty$ is a
 $k_{n+1}$-subsequence
of $\vs_{n+1}$ and so, by (iv) of Step 1 and part (i) of Fact \ref{ublock fact},
 we get that for every $n \in \nn$,  $E\cap
E_{F_{n}}$ is $k_{n+1}$-large in $(w_{i}(x))_{i=n+1}^\infty$, that is (W3). Finally, setting
$\vw = (w_n(x))_{n=0}^\infty$, by (W2) we obtain that
$\vw \leq _{k} \vs$ and the proof is complete. \end{proof}

\begin{cor}\label{unccl19}
   Let $k \in \nn$, $E \subseteq W(A)$ and $\vs \in \V$ such that
   $E$ is $k$-large in $\vs$.
   Then there exists an infinite extracted
   $k$-subsequence $\vec{t}$ of $\vs$
   such that
    $$<\vec{t} \;\|\; ( A_{k+n})_{n=0}^ \infty>_c \subseteq E.$$
\end{cor}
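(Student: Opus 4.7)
The plan is to mirror the proof of Corollary \ref{corol22}, with careful bookkeeping for alphabet indices. First I would apply Lemma \ref{unccl18} to $(k, E, \vs)$ to obtain $\vw = (w_n)_{n=0}^\infty \leq_k \vs$ together with strictly increasing sequences $(k_n)$ and $(p_n)$ satisfying (W1)--(W3); by Fact \ref{ublock fact}(i), $E$ is also $k$-large in $\vw$.

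I would then inductively produce indices $0 = r_0 < r_1 < r_2 < \ldots$ and variable words $t_n$ such that, writing $G_n = <(t_j)_{j=0}^n \| (A_{k+j})_{j=0}^n>_c$ (with the convention $G_{-1} = \emptyset$, $E_\emptyset = E$), both $G_n \subseteq E$ and $G_n \subseteq F_{r_{n+1}-1}$ hold. At stage $n$, the hypothesis $G_{n-1} \subseteq F_{r_n-1}$ gives $E_{F_{r_n-1}} \subseteq E_{G_{n-1}}$; hence by (W3), $E \cap E_{G_{n-1}}$ is $k_{r_n}$-large in $(w_j)_{j=r_n}^\infty$. Applying Fact \ref{Cl31} with parameter $k_{r_n}$ to this tail yields $r_{n+1} > r_n$ and a variable word $t_n \in <(w_j)_{j=r_n}^{r_{n+1}-1} \| (A_{k_{r_n}+j-r_n})_{j=r_n}^{r_{n+1}-1}>_v$ with $\{t_n(a) : a \in A_{k_{r_n}}\} \subseteq E \cap E_{G_{n-1}}$. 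Strict monotonicity of $(k_n)$ from $k_0 = k$ gives $k_{r_n} \geq k + r_n \geq k + n$, so $A_{k+n} \subseteq A_{k_{r_n}}$, which promotes the latter inclusion to $G_n \subseteq E$; the same monotonicity yields $A_{k_{r_n}+j-r_n} \subseteq A_{k_j}$ for $j \geq r_n$, which, together with $A_{k+n} \subseteq A_{k_j}$, establishes $G_n \subseteq F_{r_{n+1}-1}$ and closes the induction.

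It remains to verify $\vt \leq_k \vs$. I would take extraction indices $M_n := p_{r_n}$ and check
\[ t_n \in <(s_j)_{j=p_{r_n}}^{p_{r_{n+1}}-1} \; \| \; (A_{k+j})_{j=p_{r_n}}^{p_{r_{n+1}}-1}>_v \]
by expanding each $w_i$ used in $t_n$ according to (W2) and tracking the substituted letters. This alphabet tracking will be the main technical obstacle: since the parameter $k_{r_n}$ given by (W3) is typically strictly larger than $k + r_n$, the internal substitutions in $t_n$ draw from alphabets $A_{k_{r_n}+i-r_n}$ that overshoot the $A_{k+i}$ one would naively want for direct $k$-extraction from $\vw$. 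The slack is absorbed using that any position $s_{j_r}$ occupied inside $w_i$ satisfies $j_r \geq p_i \geq k_i - k$ by (W1), and $k_i \geq k_{r_n} + (i - r_n)$ by strict monotonicity of $(k_n)$; together these give $k_{r_n} + i - r_n \leq k + j_r$, which is exactly the inequality guaranteeing each substituted letter fits into $A_{k+j_r}$ at its eventual position $s_{j_r}$. With this, $\vt \leq_k \vs$ and $<\vt \; \| \; (A_{k+n})_{n=0}^\infty>_c = \bigcup_n G_n \subseteq E$, completing the proof.
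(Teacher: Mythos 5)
Your proposal is correct and follows essentially the same route as the paper: apply Lemma \ref{unccl18} to get $\vw$, $(k_n)$, $(p_n)$ with (W1)--(W3), then iterate Fact \ref{Cl31} down the tail, using (W3) via the inclusion $E_{F_{r_n-1}} \subseteq E_{G_{n-1}}$ to keep the relevant sets $k_{r_n}$-large, and finally verify $\vt \leq_k \vs$ by chaining (W2) with (W1) and the strict monotonicity of $(k_n)$ and $(p_n)$. The only cosmetic difference is in bookkeeping: you define $G_n$ with the target alphabets $A_{k+j}$ from the outset and carry the invariant $G_n \subseteq F_{r_{n+1}-1}$ explicitly, whereas the paper's Claim 1 tracks the larger spans with alphabets $A_{k_{r_i}}$ (condition (T2)) and only shrinks to $A_{k+i}$ at the very end using $A_{k+i} \subseteq A_{k_{r_i}}$; both are valid, and your version makes the inductive hypothesis slightly more transparent. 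Your alphabet-tracking for $\vt \leq_k \vs$ matches the paper's Claim 2 exactly, resting on $k + p_i \geq k_i \geq k_{r_n} + (i - r_n)$.
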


\begin{proof}
  Let $\vec{w}=(w_n(x))_{n=0}^ \infty$, $(k_n)_{n=0}^ \infty$
 and  $(p_n)_{n=0}^ \infty$ be the sequences obtained in Lemma
 \ref{unccl18}. We start with the following claim.
 \medskip

\noindent\textit{Claim 1.} There exist a strictly increasing
sequence
 $(r_n)_{n=0}^ \infty$ in $\nn$ with $r_0 =0$
 and a sequence $\vt=(t_n(x))_{n=0}^ \infty$  of variable words
 such that
for every  $n\in \nn$ the following are
  satisfied.
 \begin{enumerate}

 \item[(T1)] $t_{n}(x) \in
 <(w_{r_n+i}(x))_{i=0}^{r_{n+1}-r_{n}-1}
  \; \|\; (A_{k_{r_{n}}+i})_{i=0}^{r_{n+1}-r_{n}-1}>_v$.

 \item[(T2)] $<(t_i(x))_{i=0}^n \; \| \;
  (A_{k_{r_{i}}})_{i=0}^n>_c \subseteq E$.

 \end{enumerate}

\begin{proof}[Proof of Claim 1]
Since $ \vw \leq _k \vs$ and $E$ is $k$-large in $\vs$ we get that
$E$ is $k$-large in $\vw$. Therefore, by Fact \ref{Cl31} there
exist a positive integer $r_1$ and a variable word $$t_0(x)\in
<(w_i(x))_{i=0}^{r_1-1 }\; \|\;
  (A_{k+i})_{i=0}^{r_1-1} >_v $$ such that $\{t_0(a):a\in
A_k\}\subseteq E$. Since $k_0=k$ and $r_0=0$ we have that conditions (T1) and (T2) are
satisfied for $n=0$.

We set $G_0=\{t_0(a):a\in A_k\}$. Notice that $G_0\subseteq
<(w_i(x)_{i=0}^{r_1-1}\|(A_{k_i})_{i=0}^{r_1-1}>_c $ and so, by (W3)
of Lemma \ref{unccl18}, we get that $E\cap E_{G_0}$ is
$k_{r_1}$-large in $(w_i(x))_{i=r_1}^\infty$. Hence, again by Fact
\ref{Cl31}, there exists an integer $r_2> r_1$ and a variable word
 $$t_{1}(x) \in
 <(w_{r_1+i}(x))_{i=0}^{r_{2}-r_{1}-1}
  \; \|\; (A_{k_{r_{1}}+i})_{i=0}^{r_{2}-r_{1}-1}>_v$$ such that
  $\{t_1(a):a\in A_{k_{r_{1}}} \} \subseteq E\cap E_{G_0}$.
Observe that conditions (T1) and (T2) are also
satisfied for $n=1$. Continuing in the same way, we select the
desired sequence $\vt$.
\end{proof}

 \noindent\textit{Claim 2.} For every  $n\in \nn$, $t_{n}(x) \in
 <(s_i(x))_{i=p_{r_n}}^{p_{r_{n+1}}-1} \;
   \| \; (A_{k+i})_{i=p_{r_n}}^{p_{r_{n+1}}-1}>_v.$ Therefore, $\vt\leq_k \vs$.

\begin{proof}[Proof of Claim 2]
Fix $n\in\nn$ and let $j\in\nn$ be arbitrary.  By (W2) of Lemma
\ref{unccl18}, we have that
 $$w_{r_n+j}(x) \in
<(s_i(x))_{i=p_{r_n+j}}^{p_{{r_n+j}+1}-1} \; \| \;
(A_{k+i})_{i=p_{r_n+j}}^{p_{{r_n+j}+1}-1}>_v.$$ Moreover, by (W1)
of Lemma \ref{unccl18} and the monotonicity of the sequence
$(k_i)_{i=0}^\infty$, we have
$$k+p_{r_n+j} \geq k_{r_n+j}\geq k_{r_n}+j$$ and therefore, since the sequence of alphabets $(A_i)_{i=0}^\infty$
is increasing, we get that  $$A_{k_{r_n}+j}\subseteq
A_{k+p_{r_n+j}}.$$ Hence,
$$\{w_{r_n+j}(a): a\in A_{k_{r_n}+j}\} \subseteq
<(s_i(x))_{i=p_{r_n+j}}^{p_{{r_n+j}+1}-1} \; \| \;
(A_{k+i})_{i=p_{r_n+j}}^{p_{{r_n+j}+1}-1}>_c,$$ for every
$j\in\nn$. Therefore, for every $d\in\nn$, we conclude that
$$<(w_{r_n+j}(x))_{j=0}^{d}\; \|\;
(A_{k_{r_n}+j})_{j=0}^{d}>_v
 \subseteq <(s_i(x))_{i=p_{r_n}}^{p_{{r_n+d}+1}-1} \; \| \;
(A_{k+i})_{i=p_{r_n}}^{p_{{r_n+d}+1}-1}>_v.$$
 Setting
$d=r_{n+1}-r_n-1$ and using (T1) of Claim 1, the result follows.
\end{proof}
We are now ready to complete the proof. By Claim 2 we have that
$\vt\leq_k\vs$. Moreover, since $(k_n)_{n=0}^\infty$ and
$(p_n)_{n=0}^\infty$ are strictly increasing, we get that $A_{k+i}
\subseteq A_{k_{r_i}}$, for all $i \in \nn$. Therefore, for every
$n\in\nn$, we have
$$ <(t_i(x))_{i=0}^n \; \| \; (A_{k+i})_{i=0}^n>_c \subseteq
  <(t_i(x))_{i=0}^n \; \| \; (A_{k_{r_{i}}})_{i=0}^n>_c,$$
 and so, by  (T2) of Claim 1, we get that
$<(t_i(x))_{i=0}^n \; \| \;
  (A_{k+i})_{i=0}^n>_c \subseteq E$, for every $n\in\nn$. Hence,
 $<\vec{t} \;\|\; ( A_{k+n})_{n=0}^ \infty>_c
\subseteq E$, as desired.
\end{proof}

\begin{proof}[\textbf{Proof of Theorem \ref{thm3}}] Let $r \geq 2 $ and
let $W(A) = \cup _{i=1}^r E_i$. Let $\vec{v}=(x,x,...)$. Then we have that
 $<\vec{v} \; \| \; (A_n)_{n=0}^ \infty >_c \subseteq  W(A)=\cup_{i=1}^r E_i$.
Trivially,  $\cup_{i=1}^r E_i$ is
  $0$-large in
  $\vec{v}$.
Hence, by Fact \ref{Cl30}, there
  exist $1 \leq i \leq r$ and $\vs \leq _0 \vec{v} $
  such that $E_i$ is $0$-large in $\vs$. Applying Corollary
  \ref{unccl19} for $k=0$, we have that
  there exists
   $\vec{t} \leq_0 \vs$
   such that
    $<\vec{t} \; \| \; (A_n)_{n=0}^\infty >_c \subseteq E_i$ and the proof is complete.
\end{proof}

\begin{rem}
In \cite[Theorem 2.3]{MC} the following  version  of Theorem
\ref{thm3f} was shown.

\begin{thm}\label{unconditional left}
  Let  $A$ be a finite alphabet.
 Then for every finite coloring of $W(A)$ there exists a sequence
  $(t_n(x))_{n=0}^ \infty$ of variable words over $A$
  such that  for every $n \geq 1$, $t_n(x)$ is a left variable word and
    for every $n\in\nn$ and every  $0=m_0<m_1<...<m_n$,
  the  words of the form $t_{m_0}(a_0) t_{m_1}(a_1) ... t_{m_n}(a_n)$ with
  $a_{i} \in A$ for all $0 \leq i \leq n$ are of the same color.
\end{thm}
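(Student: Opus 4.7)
The plan is to mimic the proof structure of Theorem~\ref{thm3f}, modifying the extraction machinery to respect the constraint that $t_n(x)$ is a left variable word for $n\geq 1$. I would start by introducing an \emph{L-admissible} notion of extracted subsequence: $\vt=(t_n(x))_{n=0}^\infty\leq\vs$ is L-admissible if $t_n(x)$ is a left variable word for every $n\geq 1$. The corresponding anchored constant span is
\[
<\vt>_c^0=\{t_0(a_0)\,t_{m_1}(a_1)\cdots t_{m_n}(a_n):n\in\nn,\ 1\leq m_1<\cdots<m_n,\ a_i\in A\},
\]
which is exactly the collection of words whose monochromaticity is to be proved. A set $E\subseteq W(A)$ is L-large in $\vs$ if $E\cap<\vt>_c^0\neq\emptyset$ for every L-admissible $\vt\leq\vs$. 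The analogues of Facts~\ref{Cf13} and~\ref{Cl14} for L-largeness carry over with the same diagonal-extraction arguments.

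The heart of the adaptation is an L-analogue of Fact~\ref{uncCl31}: if $E$ is L-large in $\vs$ then there exist $m\in\nn$ and a left variable word $w(x)\in <(s_n(x))_{n=0}^m>_v$ with $\{w(a):a\in A\}\subseteq E$. I would prove this by a contradiction argument paralleling Fact~\ref{uncCl31}: assume otherwise and inductively build an L-admissible $\vw\leq\vs$ with $<\vw>_c^0\subseteq E^c$, contradicting L-largeness. The Hales--Jewett step is the delicate point; instead of the usual $2^{(|A|+1)^n}$-coloring used in Fact~\ref{uncCl31}, I would apply Hales--Jewett to a finer $2^{|A|\cdot(|A|+1)^n}$-coloring on the tail coordinates $(b_1,\ldots,b_{N-1})\in A^{N-1}$ recording, for each pair $(u,a)\in U\times A$ (where $U$ is the finite family built from previously selected atoms), whether $u\cdot a\cdot s_{n_0+1}(b_1)\cdots s_{n_0+N-1}(b_{N-1})$ lies in $E$. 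A monochromatic Hales--Jewett line $\ell$ for this enriched coloring is then to be converted, by identifying the leading letter $a$ with the HJ variable, into the left variable word $w(x)=x\cdot s_{n_0+1}(\ell_1(x))\cdots s_{n_0+N-1}(\ell_{N-1}(x))$.

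Once the L-analogue of Fact~\ref{uncCl31} is in place, the analogues of Lemma~\ref{Cl16}, Lemma~\ref{Cl20}, and Corollary~\ref{corol22} carry through with essentially verbatim proofs, with ``variable word'' replaced by ``left variable word'' at the appropriate places. The conclusion then follows exactly as in Theorem~\ref{thm3f}: a finite partition $W(A)=\bigcup_{i=1}^rE_i$ makes some $E_i$ L-large in some $\vs\leq\vec{v}=(x,x,\ldots)$ by the L-analogue of Fact~\ref{Cl14}, and the L-analogue of Corollary~\ref{corol22} then extracts the desired sequence. The principal obstacle, and where the argument departs most sharply from the proofs in the paper, is the L-analogue of Fact~\ref{uncCl31}: the Hales--Jewett dichotomy yields constancy of the enriched coloring along $\ell$ for each fixed $(u,a)$, but translating this into ``either $\{uw(a):a\in A\}\subseteq E$ or $\{uw(a):a\in A\}\cap E=\emptyset$'' for each $u$, which is what the contradiction induction requires, calls for careful bookkeeping of how the diagonal identification $y=a$ interacts with the enriched coloring. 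This is where the main technical effort of the proof is concentrated.
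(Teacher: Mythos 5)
There is a genuine gap, and it sits exactly where you locate the ``main technical effort'': your L-analogue of Fact~\ref{uncCl31} is neither proved by the proposed Hales--Jewett step nor available ``essentially verbatim'' later on. Applying Hales--Jewett to the tail coordinates $(b_1,\ldots,b_{N-1})$ with the enriched coloring gives, for each \emph{fixed} pair $(u,a)$, that membership of $u\,a\,s_{n_0+1}(\ell_1(y))\cdots s_{n_0+N-1}(\ell_{N-1}(y))$ in $E$ is independent of the line parameter $y$. But your candidate word is the diagonal: $w(a)=a\,s_{n_0+1}(\ell_1(a))\cdots s_{n_0+N-1}(\ell_{N-1}(a))$, i.e.\ $y=a$, and Hales--Jewett says nothing about how the colors compare as $a$ varies, because the leading coordinate is not part of the combinatorial line. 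So the dichotomy the contradiction-induction needs --- for each $u$, either $\{u\,w(a):a\in A\}\subseteq E$ or it is disjoint from $E$ --- simply does not follow; it can fail for every choice of $\ell$ (take $E$ so that membership depends only on the leading letter $a$). This is not bookkeeping: to control the diagonal you would need the Hales--Jewett variable to occupy the leading position, which the theorem does not provide. In addition, $w(x)=x\,s_{n_0+1}(\ell_1(x))\cdots$ is in general not an element of the variable span of $\vs$, so the sequence $\vw$ you build is not an extracted subsequence of $\vs$ and cannot contradict largeness in $\vs$ as defined.

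Even if the first step were repaired, the lemma you actually need in the iteration is false for the largeness notion your ``verbatim'' transfer of Lemma~\ref{Cl16}, Lemma~\ref{Cl20} and Corollary~\ref{corol22} would use. Once the anchor $t_0$ is chosen, the later words must be left variable words all of whose instances lie in derived sets of the form $E_F$, and the transferred argument needs: if $D$ meets $<\vt>_c$ for every extracted sequence $\vt$ of left variable words, then some left variable word $w$ satisfies $\{w(a):a\in A\}\subseteq D$. Take $|A|\geq 2$ and $D$ the set of words whose first letter is a fixed $b\in A$: $D$ is large in this sense (every such span contains $t_m(b)$), yet $w(a)$ begins with $a$ for every left variable $w$, so $\{w(a):a\in A\}\not\subseteq D$. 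So the left-variable analogues of the key facts do not carry over; one must exploit the anchoring (e.g.\ the freedom to absorb offending prefixes into earlier words) in an essential way, and your proposal supplies no mechanism for this. Note also that your monotonicity claims for the anchored span under your notion of L-admissible extraction need repair (the first word of the extracted sequence need not begin with the first block of $\vs$), which affects the Fact~\ref{Cl14} analogue. The paper itself does not prove Theorem~\ref{unconditional left} --- it attributes it to McCutcheon and states explicitly that it is open whether the present combinatorial method yields it --- so a successful adaptation would require new ideas precisely at the points above.
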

 The above theorem is  a stronger version
of a well-known result of T. Carlson and S. Simpson \cite[Theorem
6.3]{C-S}. We mention also that a left variable version of Theorem
\ref{thm3} does not hold true (see \cite[\S 3]{HMC}). Although our
approach can be applied for \cite[Theorem 6.3]{C-S} (see
\cite{Kar}), it is open for us whether it can also provide an
alternative proof of Theorem \ref{unconditional left}.
\end{rem}

\end{document}